\documentclass{amsart}
\usepackage[margin=1.2in]{geometry}
\usepackage{amssymb}
\usepackage{amscd}
\usepackage[all]{xy}
\usepackage{bbm}
\usepackage{mathrsfs}
\usepackage{enumerate}
\usepackage{tikz}
\usepackage{stmaryrd}
\usepackage{etoolbox}
\usepackage{array}
\usepackage{verbatim}
\usepackage{hyperref}
\usepackage{stmaryrd}
\usepackage{bbding}

\newcommand{\R}{\mathbb{R}}

\newcommand{\eps}{\varepsilon}

\newcommand{\del}{\nabla}

\newcommand{\lap}{\Delta}

\newcommand{\tr}{\operatorname{tr}}

\newcommand{\la}{\langle}
\newcommand{\ra}{\rangle}

\renewcommand{\div}{\operatorname{div}}

\newcommand{\grad}{\del}

\newcommand{\an}{\operatorname{An}}

\theoremstyle{plain}
\newtheorem{theorem}{Theorem}
\newtheorem{corollary}[theorem]{Corollary}
\newtheorem{prop}[theorem]{Proposition}
\newtheorem{lemma}[theorem]{Lemma}
\newtheorem{conj}[theorem]{Conjecture}

\theoremstyle{definition}
\newtheorem{defn}[theorem]{Definition}
\newtheorem{rem}[theorem]{Remark}

\author{Liam Mazurowski}
\address{Department of Mathematics, Lehigh University, Bethlehem, Pennsylvania, 18015, United States}
\email{lim624@lehigh.edu}

\author{Xuan Yao}
\address{Department of Mathematics, Princeton University, Princeton, NJ 08540}
\email{xy1216@princeton.edu}

\title{Rigidity in the Positive Mass Theorem with $C^0$ Decay}

\begin{document}

\begin{abstract}
Let $g$ be a smooth metric on $\R^3$ with non-negative scalar curvature.    We show that if $g$ satisfies $\vert g(x)-g_{\text{euc}}(x)\vert = O(\vert x\vert^{-1-\tau})$ for some $\tau > 0$ then $g$ must be flat. 
\end{abstract}

\maketitle 

\section{Introduction}

Although scalar curvature is defined using two derivatives of the metric, there is strong evidence to suggest that non-negativity of the scalar curvature is essentially a $C^0$ property.  For example, Gromov \cite{gromov2014dirac,gromov2019four} showed that non-negative scalar curvature is preserved under $C^0$ convergence of smooth manifolds to a smooth limit.  Gromov's proof relies on the non-existence of mean convex cubes with acute dihedral angles in manifolds with non-negative scalar curvature.  In fact, angles and mean convexity can be defined for $C^0$ metrics, and the associated prism rigidity theorem \cite{brendle2024scalar,li2020polyhedron} may be taken as a possible definition of non-negative scalar curvature for $C^0$ metrics.  Bamler \cite{bamler2016ricci} gave another proof of Gromov's $C^0$-convergence theorem using Ricci flow, and  Burkhardt-Guim \cite{burkhardt2019pointwise} extended these ideas to define a synthetic notion of  non-negative scalar curvature for $C^0$ metrics based on Ricci flow. Also see the  work of Lee \cite{lee2026quantification} building on Bamler's approach. We  refer to \cite{mazurowski2026quantification} and \cite{mazurowski2026scalar} for recent extensions of Gromov's $C^0$-convergence theorem proven using harmonic functions and $\mu$-bubbles, respectively. 

The positive mass theorem \cite{schoen1979proof,schoen1981proof,witten1981new} asserts that an asymptotically flat manifold $(M,g)$ with non-negative scalar curvature must have non-negative ADM mass. Moreover, there is an associated rigidity statement: if the mass is zero then $M$ is flat.  The asymptotically flat condition in this theorem requires that $g$ converges suitably quickly in $C^2$ to the Euclidean metric at infinity, and indeed this type of decay is required to even define the classical ADM mass \cite{bartnik1986mass}.  In light of the above, it is natural to wonder whether $C^0$ convergence alone is enough to retain certain aspects of the positive mass theorem.  In particular, in Four Lectures on Scalar Curvature \cite[Section 3.11]{gromov2019four}, Gromov made the following conjecture on the rigidity of the positive mass theorem under $C^0$ convergence of the metric at infinity.

\begin{conj}[Euclidean $C^0$-Rigidity Conjecture (a)] 
\label{main-conj} Assume that $g$ is a smooth metric on $\R^3$ with non-negative scalar curvature. If $g$ satisfies $\vert g(x)-g_{\operatorname{euc}}(x)\vert = o(\vert x\vert^{-1})$ then $g$ is flat. 
\end{conj}

\begin{rem}
    Gromov also remarked that ``no available technique is capable to prove this even for very fast decay.'' 
\end{rem}

In this paper, we show that Conjecture \ref{main-conj} is true for metrics which decay slightly faster than $o(\vert x\vert^{-1})$.

\begin{theorem}
\label{theorem-main} 
Assume that $g$ is a smooth metric on $\R^3$ with non-negative scalar curvature. If $g$ satisfies $\vert g(x)-g_{\operatorname{euc}}(x)\vert = O(\vert x\vert^{-1-\tau})$ for some $\tau > 0$ then $g$ is flat. 
\end{theorem}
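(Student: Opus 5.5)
We plan to argue by contradiction via a mass-type quantity that only sees $C^0$ information. The key tool is the harmonic-function / level-set mass formula of Bray--Kazaras--Khuri--Stern, or the $C^0$ mass definition of Burkhardt-Guim via Ricci flow. We take the Ricci flow route, since it converts $C^0$ decay into smooth decay. So we run Ricci flow $g(t)$ from $g$ on $\R^3$. Because $g$ is $C^0$-close to Euclidean (uniformly, after restricting to large $|x|$, and bounded geometry overall since $g$ is smooth), the work of Koch--Lamm and Burkhardt-Guim gives a long-time solution for which $|g(t)-g_{\mathrm{euc}}|$ stays small and all derivatives satisfy $|\partial^k g(t)|\lesssim t^{-k/2}$. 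Non-negative scalar curvature is preserved along the flow.

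The first step is to propagate the weighted decay. We would show that for each fixed $t>0$,
\[
|g(t)-g_{\mathrm{euc}}|(x)+|x|\,|\partial g(t)|(x)+|x|^2|\partial^2 g(t)|(x)=O(|x|^{-1-\tau}).
\]
To do this, write Ricci--DeTurck flow as $\partial_t h=\Delta h+\partial(h\ast\partial h)+\ldots$ for $h=g-g_{\mathrm{euc}}$. We then use heat-kernel estimates in weighted $L^\infty$ spaces: the polynomial weight $|x|^{-1-\tau}$ is preserved by the Euclidean heat semigroup on bounded time intervals, and the nonlinearity is quadratic. Derivative bounds then follow from local parabolic smoothing at scale $\sqrt t\ll |x|$.

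The second step uses this decay. Since $1+\tau>1/2$ and $R(g(t))\ge 0$, the metric $g(t)$ is asymptotically flat in the classical sense, with decay rate $q=1+\tau$. The ADM mass, computed from $\partial g(t)=O(|x|^{-2-\tau})$, has flux integrand $O(r^{-2-\tau})$ on spheres of area $r^2$, so it vanishes: $m(g(t))=0$. The rigidity case of the positive mass theorem then gives that $g(t)$ is flat for every $t>0$.

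Finally, we let $t\to0$. The flow is smooth up to $t=0$ because $g$ is smooth, so $g(t)\to g$ in $C^\infty_{\mathrm{loc}}$ and $g$ is flat. (We note that Ricci flow of a flat metric is static; we do not need backward uniqueness here, only smooth convergence.)

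The main obstacle is the first step: obtaining weighted decay of $g(t)-g_{\mathrm{euc}}$ at rate $|x|^{-1-\tau}$, uniformly in $x$, from merely $C^0$ initial decay. Near $t=0$ the derivative bounds on the nonlinear term only hold as $t^{-1/2}$. We would handle this with a fixed-point argument in the norm $\sup_{t,x}(1+|x|)^{1+\tau}\bigl(|h|+\sqrt t\,|\partial h|\bigr)$. We would first try to close the argument with smallness coming from $\sup|h_0|$ after rescaling. Smallness is available by blowing down, since $g$ is Euclidean-close outside a large ball. However, the compact interior region is not small, so we would need to patch the interior and exterior pieces using a local existence argument in the interior combined with finite propagation of the weight.
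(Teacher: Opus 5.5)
There is a genuine gap in your first step, and everything after it depends on that step. Parabolic smoothing on cylinders of size $\sqrt t$ around $x$ gives only $|\partial^k h(t)|(x)\lesssim t^{-k/2}\sup_{B(x,\sqrt t)}|h|\lesssim t^{-k/2}|x|^{-1-\tau}$. Each derivative costs a factor $t^{-1/2}$, and nothing produces the extra factor $|x|^{-1}$ you claim. Your own fixed-point norm $\sup(1+|x|)^{1+\tau}(|h|+\sqrt t\,|\partial h|)$ records exactly this. Gaining $|x|^{-1}$ per derivative would require smoothing at scale $|x|$, i.e.\ at times $t\sim|x|^2$, which is not a single time slice.

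In the coordinates you use, the claimed estimate is actually false. For the heat equation with $h_0=\chi(x)|x|^{-1-\tau}\sin x_1$ one has $h(t)=e^{-t}h_0+O(|x|^{-2-\tau})$, so $|\partial h(t)|$ is not $o(|x|^{-1-\tau})$. Likewise, a flat metric $g=\Phi^*g_{\mathrm{euc}}$ with $\Phi-\mathrm{id}$ of size $|x|^{-1-\tau}$, oscillating at unit scale, satisfies all your hypotheses. Its Ricci--DeTurck flow against the Euclidean background runs, to leading order, a heat flow on $\Phi$. That flow damps the oscillation by a fixed factor but leaves $|\partial g(t)|\sim|x|^{-1-\tau}$.

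So at fixed $t>0$ you only know that $g(t)$ has bounded geometry and $|\partial^k(g(t)-g_{\mathrm{euc}})|=O_t(|x|^{-1-\tau})$ for every $k$, with the same rate for every derivative. For small $\tau$, which is the whole content of the theorem, the positive mass theorem cannot use this. The ADM flux over $S_r$ is only $O(r^{1-\tau})$. The quadratic terms $|\partial h|^2+|h||\partial^2 h|\sim r^{-2-2\tau}$ in the expansion of $R$ are not integrable for $\tau\le 1/2$, so the mass of $g(t)$ is not even defined. Only for large $\tau$ (roughly $\tau>3/2$) do these bounds make $g(t)$ classically asymptotically flat of order $\tau-1>1/2$ with vanishing flux. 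In the relevant range you have essentially reduced the theorem to itself, restricted to bounded-geometry metrics.

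What is missing is twofold: a mechanism that gains $|x|^{-1}$ per derivative without any derivative control on $g$, and a rigidity statement that needs only $C^0$ information on $g$. The paper gets the first elliptically. The Green's function $u$ solves a divergence-form equation, so scale-invariant Meyers and De Giorgi--Nash--Moser estimates on annuli give $Ru(Ry)\to|y|^{-1}$ in $W^{1,p}(\an)$ for $p>3$. The $O(|x|^{-1-\tau})$ hypothesis makes $B\grad u\in L^1$, which yields the next term $R^{-1}\la b+\overline X,y\ra|y|^{-3}$, and it makes $a(g_a-g_{\mathrm{euc}})\to 0$ in $C^0$.

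For the second, the paper replaces the positive mass theorem by the Agostiniani--Mazzieri--Oronzio monotone quantity $F$. It integrates $F$ twice into $D(a)$, which is continuous in $(g,u)\in C^0\times W^{1,p}$. Then $aD(a)$ is non-decreasing, and its limit is the linearization evaluated on the dipole term, which vanishes by symmetry; this forces $F\equiv0$.

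There are also smaller issues. Smoothness of $g$ does not give bounded geometry on $\R^3$, since $C^0$ decay says nothing about curvature at infinity. Koch--Lamm also requires global $L^\infty$ smallness. You would therefore need Simon-type $C^0$ existence with a background metric equal to $g$ on a large ball and Euclidean outside, plus an argument that $R\ge 0$ is preserved without curvature bounds at $t=0$. These are repairable; the missing decay gain is not.
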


\begin{rem}\label{rem: decay}
It is not hard to see that for any $c > 0$ there is a smooth, non-flat metric $g$ on $\R^3$ with non-negative scalar curvature which satisfies 
\[
g = \left(1 + c \vert x\vert^{-1} + O(\vert x\vert^{-2})\right)g_{\text{euc}}.
\]
For example, one can construct such a $g$ by taking a small perturbation of the round metric on $S^3$ and then performing a conformal change by a Green's function for the conformal Laplacian. 
Thus $o(\vert x\vert^{-1})$ decay is the natural cutoff for rigidity. 
\end{rem}

\begin{rem}
We would like to note that Conjecture \ref{main-conj}  can also be deduced as a  corollary of the very nice isoperimetric mass rigidity theorem of Benatti-Fogagnolo-Mazzieri  \cite[Theorem 2.14]{benatti2025isoperimetric}. See \cite{you2026gromov} for another proof of Conjecture \ref{main-conj}  by extending the results in this work. 
\end{rem}

\begin{rem}
    The results of this paper have now been superseded by the authors' more recent work arXiv:2606.19123. 
\end{rem}

\subsection{Discussion} There is great interest in extending the positive mass theorem below the classical $C^2$ regularity threshold.  One approach studies Sobolev metrics with non-negative scalar curvature in the sense of distributions. For example, Lee and LeFloch \cite{lee2015positive} proved a positive mass theorem with rigidity for $C^0 \cap W^{1,n}$ metrics using techniques from spin geometry. 

Another approach is based on Huisken's isoperimetric mass \cite{huisken2006isoperimetric}. The isoperimetric mass is appealing because it is easy to define on manifolds with only $C^0$ regularity.  However, it was observed by Jauregui, Lee, and Unger \cite{jauregui2024note} that the isoperimetric mass is always non-negative for elementary reasons, and that this does not require non-negativity of the scalar curvature.  Nevertheless, there is still an interesting rigidity question: if a manifold has zero isoperimetric mass does this imply that it is flat? We also mention the related isocapacitary mass introduced by Jauregui \cite{jauregui2024adm}, and note that it also has applications to the low regularity setting \cite{benatti2023nonlinear}. 

Finally, there is a Ricci flow approach to studying scalar curvature for low regularity metrics. Burkhardt-Guim \cite{burkhardt2019pointwise} defined a synthetic notion of scalar curvature lower bounds for $C^0$ metrics using the Ricci flow.  Then in \cite{burkhardt2024adm}, Burkhardt-Guim defined a mass for $C^0$ asymptotically flat manifolds with non-negative scalar curvature in the Ricci flow sense, and showed that this mass is a well-defined geometric quantity, independent of any choices used in the definition.  It is still an open problem whether this mass must be non-negative \cite[Question 2]{burkhardt2024adm}. 

In this paper, we consider smooth metrics $g$ on $\R^3$ with non-negative scalar curvature in the classical sense, but we impose only $C^0$ decay assumptions at infinity.  Heuristically, because the condition 
\begin{equation}
\label{decay1}
\vert g-g_{\text{euc}}\vert = O(\vert x\vert^{-1-\tau})
\end{equation}
rules out the examples in Remark \ref{rem: decay}, one expects that any metric satisfying \eqref{decay1} will have mass zero for any reasonable low regularity generalization of the ADM mass.  If there is a $C^0$ positive mass theorem, then $g$ should be flat.  Theorem \ref{theorem-main} confirms that this is indeed the case. 

\subsection{Sketch of Proof} 
The proof of Theorem \ref{theorem-main} is based on the harmonic function method for studying scalar curvature under $C^0$ convergence developed by the authors in \cite{mazurowski2026quantification}.  Assume that $g$ is a metric on $\R^3$ with non-negative scalar curvature which satisfies the $C^0$ decay estimate 
\[
\vert g-g_{\text{euc}}\vert = O(\vert x\vert^{-1-\tau})
\]
for some $\tau > 0$.  It is well-known that such a metric $g$ admits a Green's function $u$ for $\lap_g$ with a pole at the origin.  

The first step is to obtain an asymptotic expansion for $u$ near infinity.  Let $a_{ij} = \sqrt{\vert g\vert}g^{ij}$ and set $A = (a_{ij})$.  By assumption, one has $A = I - B$ where $B = O(\vert x\vert^{-1-\tau})$.  We rewrite the equation $\lap_g u = 0$ as $\lap u = \div(B\grad u)$.  Let $X$ be a smooth vector field on $\R^3$ that agrees with $B\grad u$ outside a compact set.  Crucially, the decay rate for $B$ implies that $X\in L^1$.  Hence we can obtain a function $w$ which solves $\lap w = \div(X)$ in the sense of distributions by convolving the fundamental solution of the Laplacian with $\div(X)$.   The difference $h = u-w$ is then harmonic outside a compact set and satisfies $h\to 0$ at infinity. 

There is a well-known asymptotic expansion for any such $h$.  Moreover, we can use the explicit formula for $w$ to obtain an asymptotic expansion for $w$. Initially, this expansion for $w$ is only valid in a scale invariant $L^q$ norm.  However, returning to the PDE and using the fact that $B$ is small, we can use elliptic regularity to upgrade this to a scale invariant $W^{1,p}$ norm.  We thus obtain an expansion for $u$ of the form 
\[
u(x) = \frac{c}{\vert x\vert} + \frac{\la b + \overline X,x\ra}{\vert x\vert^3} + \hdots
\]
valid in a scale invariant $W^{1,p}$ sense. Here $c > 0$ is a constant, and $b$ and $\overline X$ are both fixed vectors in $\R^3$. 

The second step is to study various quantities associated to $u$.  The starting point is the $F$ function   introduced in \cite{agostiniani2024green}.  For each $t > 0$, the number $F(t)$ is defined as an integral of various geometric quantities over the level set $\{u=\frac 1 t\}$.  Since $g$ has non-negative scalar curvature, the $F$ function is non-decreasing and non-negative \cite{agostiniani2024green}.  However, we note that the $F$ function is highly unstable with respect to $C^0$ changes in the metric.

Thus, as in \cite{mazurowski2026quantification}, we integrate $F(t)$ twice to obtain a quantity $D(a)$ that captures information about the harmonic function $u$ and its gradient on an annulus with inner radius $a$ and outer radius $4a$.  This quantity $D(a)$ depends continuously on $u$ in $W^{1,p}$ for $p > 3$ and is therefore stable under small $C^0$ changes to the metric. Next, we prove that $D$ inherits monotonicity and non-negativity properties from $F$. More precisely, one has $D(a) \ge 0$ and the map $a\mapsto aD(a)$ is non-decreasing.  Finally, we use the above $W^{1,p}$-asymptotic expansion for $u$ to show that $aD(a) \to 0$ as $a\to \infty$.  In turn, this implies that $D(a)\equiv 0$ and hence that $F(t) \equiv 0$.  But it is known that if $F$ vanishes identically, then $g$ must be flat, and this concludes the proof. 

\begin{rem}
When $g$ is asymptotically flat in the usual sense, one can show that $aD(a)$ converges to a constant multiple of the ADM mass as $a\to \infty$.  Thus one may view $\lim_{a\to\infty} aD(a)$ as another potential generalization of the ADM mass in the low regularity setting. 
\end{rem}

\subsection{Organization} The remainder of the paper is organized as follows.  In Section \ref{section-green}, we derive the $W^{1,p}$ asymptotic expansion for the Green's function.  Then in Section \ref{section-mass}, we define the $D$ function and prove that $aD(a) \to 0$ as $a\to \infty$.  As explained above, this implies Theorem \ref{theorem-main}. Finally, in Appendix \ref{F-Appendix}, we summarize some properties of the $F$ function needed in the proof.

\subsection{Acknowledgments} 
We would like to thank Xin Zhou for his interest in this work.  We are grateful to Mattia Fogagnolo for bringing several helpful references to our attention after the first version of this work appeared.

\section{Green's Function Asymptotics} 

\label{section-green}

Assume $g$ is a smooth Riemannian metric on $\R^3$ which satisfies the $C^0$ decay estimate 
\[
\vert g(x) - g_{\text{euc}}(x)\vert = O(\vert x\vert^{-1-\tau})
\]
for some $\tau > 0$.  
Define $a_{ij} = g^{ij} \sqrt{\vert g\vert}$ and let $A = (a_{ij})$ so that $\lap_g w = 0$ if and only if $\mathcal Lw := \div(A\grad w) = 0$.  Note that $A$ is bounded and uniformly elliptic. Let $u$ be a Green's function for $\mathcal L$ with a pole at the origin.  Classical results (see for example \cite[Theorem (1.1)]{gruter1982green}) imply that $u$ exists and satisfies two sided bounds 
\begin{equation}
\label{initial-two-side-bound}
\frac{c}{\vert x\vert} \le u(x) \le \frac{C}{\vert x\vert}
\end{equation} 
for some positive constants $0 < c < C$. Multiplying $u$ by a constant if necessary, we can suppose that the flux of $\grad u$ over any level set of $u$ is $4\pi$. 
We would like to show that $u$ actually has a relatively nice asymptotic expansion near infinity.

Write $A = I - B$ where $B = O(\vert x\vert^{-1-\tau})$. We can rewrite the equation $\mathcal L u = 0$ (away from the pole) as 
\[
0 = \div(A\grad u) = \div((I-B)\grad u) = \lap u - \div(B\grad u). 
\]
Thus $u$ solves 
\[
\lap u = \div(B\grad u)
\]
away from the origin.  Let $X$ be a smooth vector field on $\R^3$ which is equal to $B\grad u$ outside of $B(0,1/16)$.  The first step is to establish some estimates on the vector field $X$.

\subsection{Estimating the Right Hand Side} 
It is convenient to introduce a family of annuli.  For $R > 0$, define $\an(R)$ to be the annulus with inner radius $R/8$ and outer radius $8R$. Clearly we have a bound 
\[
\vert u(x)\vert \le \frac{C}{R}
\]
on $\an(R)$ from \eqref{initial-two-side-bound}. This implies an $L^2$ bound on $\grad u$ using a Caccioppoli type inequality.

\begin{prop} 
\label{caccioppoli} 
We have 
\[
\int_{\an(R)} \vert \grad u\vert^2\, dx \le \frac{C}{R}
\]
where $C$ does not depend on $R$. 
\end{prop}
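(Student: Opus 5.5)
The plan is a standard Caccioppoli argument on a slightly larger annulus, combined with uniform ellipticity of $A$ and the pointwise bound \eqref{initial-two-side-bound}.

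\textbf{Setup.} Work first with $R$ large enough that the enlarged annulus with inner radius $R/16$ and outer radius $16R$ stays away from the pole, for instance $R \ge 1$. Choose a cutoff $\eta \in C_c^\infty$ of this enlarged annulus with $\eta \equiv 1$ on $\an(R)$ and $\vert \grad \eta\vert \le C/R$. Since $u$ is smooth and solves $\mathcal L u = \div(A\grad u) = 0$ away from the origin, we may test the equation against $\eta^2 u$ and integrate by parts. This gives
\[
\int \eta^2 \la A\grad u,\grad u\ra\,dx = -2\int \eta u \la A\grad u,\grad \eta\ra\,dx .
\]

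\textbf{Absorbing the cross term.} Uniform ellipticity gives $\la A\xi,\xi\ra \ge \lambda\vert\xi\vert^2$, and boundedness gives $\vert A\vert \le \Lambda$. Apply Cauchy--Schwarz and Young to the right-hand side and absorb half of it into the left. This yields
\[
\int \eta^2\vert\grad u\vert^2\,dx \le C\int u^2\vert\grad\eta\vert^2\,dx .
\]

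\textbf{Conclusion for large $R$.} On the support of $\grad \eta$ we have $u \le C/R$ by \eqref{initial-two-side-bound}. This support has volume at most $CR^3$, and there $\vert\grad\eta\vert^2 \le C/R^2$. The right-hand side is therefore bounded by
\[
C\cdot R^{-2}\cdot R^{-2}\cdot R^3 = C/R ,
\]
which is the claimed estimate.

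\textbf{Small $R$.} For bounded $R$, the annulus $\an(R)$ may approach the pole and the annulus-with-margin argument breaks down. This is the only step needing care. Near the origin the Green's function behaves like $1/\vert x\vert$ with $\vert\grad u\vert \lesssim \vert x\vert^{-2}$, so the integral over $\an(R)$ is still of order $1/R$. This can be seen either by rescaling, applying the same Caccioppoli argument on annuli at scale $R$ where $u \le C/R$, or by quoting the Grüter--Widman gradient estimates. In all cases the constant depends only on $\lambda$, $\Lambda$ and the constants in \eqref{initial-two-side-bound}, not on $R$. The estimate is mainly needed for large $R$ in any case.
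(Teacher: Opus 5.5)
Your proof is correct and is essentially the paper's argument. You use the same cutoff supported in $\{R/16\le \vert x\vert\le 16R\}$ with $\vert\grad\eta\vert\le C/R$, test against $\eta^2 u$, absorb the cross term via Young's inequality, and finish with $u\le C/R$ on the support. Your separate treatment of small $R$ is unnecessary. That enlarged annulus avoids the pole for every $R>0$, and \eqref{initial-two-side-bound} holds on all of $\R^3\setminus\{0\}$, so the same computation gives an $R$-independent constant for all $R>0$ at once.
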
 

\begin{proof} 
Let $\eta$ be a smooth cut-off function which is 1 on $\an(R)$, 0 inside $B(0,R/16)$, and 0 outside $B(0,16R)$.  We can arrange that $\vert \grad \eta \vert \le \frac{C}{R}$.  We test the equation for $u$ against the function $\eta^2 u$ to get 
\begin{align*}
0 = \int \la A\grad u, \grad(\eta^2 u)\ra \, dx = \int \eta^2 \la A\grad u,\grad u\ra \, dx + \int 2\eta u \la A\grad u , \grad \eta\ra \, dx.
\end{align*}
Using the fact that $A$ is uniformly elliptic and bounded, this implies that 
\[
 \int \eta^2 \vert \grad u\vert^2 \, dx \le  C  \int \eta \vert u\vert \vert \grad u\vert \vert \grad \eta\vert\, dx. 
\]
Now we use Young's inequality with $\eps$ to get 
\[
\int \eta^2 \vert \grad u\vert^2 \, dx \le \frac 1 2 \int \eta^2 \vert \grad u\vert^2\, dv + C \int u^2 \vert \grad \eta\vert^2\, dx. 
\]
This implies that 
\[
\int_{\an(R)} \vert \grad u\vert^2\, dx \le C \int u^2 \vert \grad \eta\vert^2\, dx. 
\]
Finally, we use the bounds $\vert u\vert \le \frac{C}{R}$ and $\vert \grad \eta\vert \le \frac{C}{R}$ to get 
\[
\int_{\an(R)} \vert \grad u\vert^2\, dx \le \frac{C}{R},
\]
as needed.
\end{proof} 

This implies the following integrability for $X$. 

\begin{prop}
The vector field $X$ belongs to $L^1$. 
\end{prop}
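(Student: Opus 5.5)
We split the domain into the ball $B(0,1)$ and a dyadic family of annuli covering the rest. On $B(0,1)$ there is nothing to prove. There $X$ is smooth inside $B(0,1/16)$ by construction, and on the compact region $\overline{B(0,1)}\setminus B(0,1/16)$ it equals $B\grad u$, which is smooth because $u$ is smooth away from the pole. So $\int_{B(0,1)}\vert X\vert\,dx<\infty$.

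For the exterior region, write $\R^3\setminus B(0,1)\subset \bigcup_{k\ge 0}\an(2^k)$; each $\an(2^k)$ contains the dyadic shell $\{2^k\le \vert x\vert\le 2^{k+1}\}$. On $\an(R)$ with $R\ge 1$ we have $X=B\grad u$ and $\vert B\vert\le C R^{-1-\tau}$, since $\vert x\vert\ge R/8$ there and the decay hypothesis controls $B$ uniformly on $\vert x\vert\ge 1/8$. Applying Cauchy--Schwarz and then Proposition \ref{caccioppoli} gives
\[
\int_{\an(R)}\vert X\vert\,dx \le C R^{-1-\tau}\,\vert \an(R)\vert^{1/2}\Big(\int_{\an(R)}\vert\grad u\vert^2\,dx\Big)^{1/2}\le C R^{-1-\tau}\cdot R^{3/2}\cdot R^{-1/2} = C R^{-\tau}.
\]

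Summing over $R=2^k$ for $k\ge 0$ gives $\sum_k C 2^{-k\tau}<\infty$. Together with the bounded contribution from $B(0,1)$, this shows $X\in L^1(\R^3)$.

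The only point needing care is the claim $\vert B\vert = O(\vert x\vert^{-1-\tau})$ with a uniform constant on $\vert x\vert\ge 1/8$. This follows from the hypothesis $\vert g - g_{\text{euc}}\vert = O(\vert x\vert^{-1-\tau})$: first, $g^{ij}$ and $\sqrt{\vert g\vert}$ depend smoothly on $g$ near the identity, so $\vert A - I\vert\le C\vert g-g_{\text{euc}}\vert$ for large $\vert x\vert$. Second, on the compact middle region $B$ is bounded, and $\vert x\vert^{-1-\tau}$ is bounded below there, so the same estimate holds after enlarging the constant. The scaling $R^{-1-\tau}\cdot R$ shows that $\tau>0$ is exactly what makes the sum converge.
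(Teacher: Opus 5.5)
Your proof is correct and follows the same route as the paper. On each annulus $\an(R)$ you bound $\int_{\an(R)}\vert X\vert\,dx$ by $CR^{-1-\tau}\vert\an(R)\vert^{1/2}\|\grad u\|_{L^2(\an(R))}\le CR^{-\tau}$ using Cauchy--Schwarz and Proposition \ref{caccioppoli}, and then sum over a geometric sequence of scales ($2^k$ where the paper uses $16^k$). Your explicit handling of the bounded region and of the uniform bound $\vert B\vert\le C\vert x\vert^{-1-\tau}$ on $\vert x\vert\ge 1/8$ fills in details the paper leaves implicit.
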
 

\begin{proof}
Again consider an annulus $\an(R)$. For large $R$, we have $X = B\grad u$ and we know that $B = O(\vert x\vert^{-1-\tau})$. Therefore, for large $R$, we have 
\begin{align*}
\int_{\an(R)} \vert X\vert\, dx &\le \int_{\an(R)} \vert B\vert \vert \grad u\vert\, dx \\
&\le \frac{C}{R^{1+\tau}} \int_{\an(R)} \vert \grad u\vert\, dx \\
&\le \frac{C}{R^{1+\tau}} \left(\int_{\an(R)} \vert \grad u\vert^2\, dx\right)^{1/2} \vert \an(R)\vert^{1/2} \\
&\le \left(\frac{C}{R^{1+\tau}} \right)\left(\frac{C}{R^{1/2}}\right) \left(C R^{3/2}\right) = CR^{-\tau}.  
\end{align*}
Here we used Proposition \ref{caccioppoli} to get from the third to the fourth line. 
Next, observe that 
\[
\int_{\{\vert x\vert\ge 2\}} \vert X\vert\, dx = \sum_{k=1}^\infty \int_{\an(16^k)} \vert X\vert\, dx.
\]
So by the above bound, to get $X\in L^1$ it suffices to note that the series 
\[
\sum_{k=1}^\infty (16^k)^{-\tau} = \sum_{k=1} 16^{-\tau k}
\]
converges. 
\end{proof} 

\subsection{Convolution with the Fundamental Solution} 
Next, we find a function $w$ which satisfies $\lap w = \div(X)$ in the sense of distributions. We can obtain such a $w$ by convolving the fundamental solution of  Laplacian with $\div(X)$ and integrating by parts to take the derivatives off of $X$: 
\begin{align*}
w(x) &= -\frac 1 {4\pi}\int_{\R^3} \frac{\div X(y)}{\vert x-y\vert}\, dy \\
&= \frac 1 {4\pi} \int_{\R^3} \left\la \grad_y \frac{1}{\vert x-y\vert}, X(y)\right\ra \, dy\\
&= \frac 1 {4\pi}  \int_{\R^3} \left\la \frac{x-y}{\vert x-y\vert^3}, X(y)\right\ra \, dy.
\end{align*}
Observe that one has 
\begin{align*}
\vert w(x)\vert \le \frac 1 {4\pi} \int_{\R^3} \frac{\vert X(y)\vert}{\vert x-y\vert^2}\, dy.
\end{align*}
It is easy to see that the right hand side converges for almost every $x$ and defines an $L^1_{loc}$ function. 

\begin{prop} 
The function $w$ defined by 
\[
w(x) = \frac{1}{4\pi} \int_{\R^3}  \left\la \frac{x-y}{\vert x-y\vert^3}, X(y)\right\ra \, dy
\] 
is well-defined and belongs to $L^1_{loc}$.  Moreover, $w$ solves $\lap w = \div(X)$ in the sense of distributions. 
\end{prop}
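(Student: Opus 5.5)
The plan is to treat $w$ as the convolution of $X\in L^1$ with the kernel $K(z) = \frac{1}{4\pi}\frac{z}{\vert z\vert^3}$, and to prove both claims from standard facts about such convolutions.

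\emph{Local integrability.} We dominate $\vert w\vert$ by $\frac1{4\pi}(\vert X\vert * \vert z\vert^{-2})$. Fix a ball $B(0,R)$ and split $X = X\chi_{B(0,2R)} + X\chi_{B(0,2R)^c}$.
\begin{itemize}
\item For the near part, Tonelli gives
\[
\int_{B(0,R)} \int_{B(0,2R)} \frac{\vert X(y)\vert}{\vert x-y\vert^2}\,dy\,dx \le \|X\|_{L^1}\sup_{y}\int_{B(0,R)}\frac{dx}{\vert x-y\vert^2} \le C R\,\|X\|_{L^1},
\]
since $\vert z\vert^{-2}$ is locally integrable in $\R^3$.
\item For the far part, $\vert x-y\vert\ge R$ when $x\in B(0,R)$ and $\vert y\vert\ge 2R$. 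So the integrand is bounded by $R^{-2}\vert X(y)\vert$, and this part is bounded by $R^{-2}\|X\|_{L^1}$.
\end{itemize}
Hence $\int_{B(0,R)}\int \vert K(x-y)\vert\,\vert X(y)\vert\,dy\,dx<\infty$. It follows that the defining integral converges absolutely for a.e. $x$ and that $w\in L^1_{loc}$. Here we use that $X\in L^1$ by the previous proposition, and that $X$ is smooth, hence locally bounded.

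\emph{The distributional equation.} Take $\varphi\in C_c^\infty(\R^3)$. We must show $\int w\,\lap\varphi\,dx = -\int \la X,\grad\varphi\ra\,dx$.
\begin{enumerate}
\item By the absolute integrability just established (with $\lap\varphi$ bounded and compactly supported), Fubini lets us swap the order of integration:
\[
\int w\lap\varphi\,dx = \int \Big\la X(y), \int K(x-y)\lap\varphi(x)\,dx\Big\ra\,dy.
\]
\item Since $K(x-y) = -\grad_y \Phi(x-y)$ with $\Phi(z) = \frac{1}{4\pi\vert z\vert}$, we get $K(x-y) = \grad_x\Phi(x-y)$. Therefore the inner integral equals $\grad_y(\Phi*\lap\varphi)(y)$. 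Differentiating under the integral is justified because $\Phi*\lap\varphi$ can be written as $\int\Phi(z)\lap\varphi(y-z)\,dz$ with $\varphi$ smooth and compactly supported.
\item The classical identity $\Phi*\lap\varphi = -\varphi$ (the fundamental solution statement $-\lap\Phi=\delta_0$) then shows the inner integral is $-\grad\varphi(y)$.
\end{enumerate}
This gives $\int w\lap\varphi = -\int\la X,\grad\varphi\ra = \la \div X,\varphi\ra$, as required.

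\emph{Main obstacle.} The main point is justifying Fubini in step 1. We need $\int\int \vert K(x-y)\vert\,\vert X(y)\vert\,\vert\lap\varphi(x)\vert\,dx\,dy<\infty$. This is exactly the local integrability estimate above, which rests on $X\in L^1$ and not merely $L^1_{loc}$. That global integrability is what controls the contribution from $y$ near infinity. The remaining steps are standard properties of the Newtonian potential.
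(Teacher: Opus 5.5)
Your proposal is correct and follows the paper's route: Tonelli plus a bound on $\int_{B}\vert x-y\vert^{-2}\,dx$ that is uniform in $y$ gives $w\in L^1_{loc}$, so your near/far split is harmless but not needed. Your Fubini argument reducing to $\Phi*\lap\varphi=-\varphi$ supplies the details the paper leaves as ``well-known.'' There is one sign slip to fix: since $\grad\Phi(z)=-K(z)$, the correct identities are $K(x-y)=\grad_y\Phi(x-y)=-\grad_x\Phi(x-y)$, the opposite of what you wrote, although your next formula $\int K(x-y)\lap\varphi(x)\,dx=\grad_y(\Phi*\lap\varphi)(y)$ and the conclusion $\lap w=\div X$ are right.
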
 

\begin{proof} 
Consider any ball $B(q,r)\subset \R^3$. Then by Tonelli's theorem we have 
\begin{align*}
4\pi \int_{B(q,r)} \vert w(x)\vert\, dx \le \int_{x\in B(q,r)} \int_{y\in \R^3} \frac{\vert X(y)\vert}{\vert x-y\vert^2}\, dy\, dx &= \int_{y\in \R^3} \vert X(y)\vert \int_{x\in B(q,r)} \frac{1}{\vert x-y\vert^2}\, dx\, dy. 
\end{align*}
Now for any fixed $y\in \R^3$, we have 
\begin{align*}
\int_{x\in B(q,r)} \frac{1}{\vert x-y\vert^2}\, dx &= \int_0^\infty \frac{1}{t^2} \mathcal H^2(B(q,r)\cap \{\vert x-y\vert = t\})\, dt \\
&\le 4\pi \vert \{t: B(q,r) \cap \{\vert x-y\vert = t\} \neq \emptyset\} \vert \le 8\pi r. \phantom{\int}
\end{align*} 
Thus we obtain 
\[
\int_{y\in \R^3} \vert X(y)\vert \int_{x\in B(q,r)} \frac{1}{\vert x-y\vert^2}\, dx\, dy \le 8\pi r \|X\|_{L^1}. 
\]
This shows that $w$ is a well-defined function almost everywhere and that $w\in L^1_{loc}$. It is well-known that this convolution procedure produces a distributional solution to $\lap w = \div(X)$. 
\end{proof}

Now since $X$ is in $L^1$ there is a well-defined vector 
\[
\overline X = \frac 1 {4\pi} \int_{\R^3} X(x) \, dx. 
\]
We claim that $\overline X$ governs the asymptotic behavior of $w$. In fact, $w$ should behave like the harmonic function 
$
{\la x, \overline X\ra}\vert x\vert^{-3}
$
near infinity in a suitable norm. We now make this precise. 

\begin{prop}
\label{annulus-estimate}
We have an estimate 
\[
 \frac{1}{\vert \an(R)\vert} \int_{\an(R)} \left\vert R^2 \left(w(x) - \frac{\la x, \overline X\ra}{\vert x\vert^3}\right)\right\vert^q \, dx \to 0
\]
as $R\to \infty$ for every fixed $1\le q < \frac 3 2$. 
\end{prop}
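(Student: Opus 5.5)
Write $K(z) = \frac{z}{4\pi\vert z\vert^3}$, so that $w(x) = \int K(x-y)\cdot X(y)\,dy$ and $\frac{\la x,\overline X\ra}{\vert x\vert^3} = \int K(x)\cdot X(y)\,dy$. Hence
\[
w(x) - \frac{\la x,\overline X\ra}{\vert x\vert^3} = \int_{\R^3} \big(K(x-y)-K(x)\big)\cdot X(y)\,dy .
\]
Fix $x\in\an(R)$, so $\vert x\vert\in [R/8,8R]$. I will split the $y$-integral into three regions: the near region $\vert y\vert\le \delta R$, the intermediate region $\delta R<\vert y\vert\le MR$, and the far region $\vert y\vert> MR$. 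Here $\delta$ is small and $M$ is large, both to be chosen after $\eps$ is fixed. In each region I estimate the scale-invariant $L^q$ average of $R^2$ times the contribution and show it is $\lesssim \eps$ for large $R$. By Minkowski's inequality it suffices to treat each piece separately.

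\emph{Near region.} For $\vert y\vert\le\delta R$ and $x\in\an(R)$, the mean value estimate gives $\vert K(x-y)-K(x)\vert\le C\vert y\vert R^{-3}$. Thus $R^2$ times this piece is bounded pointwise by $CR^{-1}\int_{\vert y\vert\le\delta R}\vert y\vert\vert X(y)\vert\,dy$. Split this integral at $\vert y\vert=\sqrt R$ (or any $\rho$ with $\rho\to\infty$ and $\rho/R\to 0$). The inner part is at most $R^{-1}\sqrt R\,\|X\|_{L^1}\to 0$. The outer part is at most $\delta\int_{\vert y\vert\ge\sqrt R}\vert X\vert$, which tends to $0$ since $X\in L^1$. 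In fact this piece tends to $0$ for any fixed $\delta$, and even with $\delta=1/16$. So I can take $\delta=1/16$ and drop the parameter.

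\emph{Far region.} For $\vert y\vert\ge MR$ with $M\ge 16$, we have $\vert K(x-y)\vert+\vert K(x)\vert\le CR^{-2}$. So $R^2$ times this piece is bounded by $C\int_{\vert y\vert\ge 16R}\vert X\vert\to0$. Taking $M=16$ fixed suffices here too.

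\emph{Intermediate region.} This is the main obstacle, because $K(x-y)$ is singular when $y$ is near $x$ and the pointwise bounds fail. The $K(x)$ part is harmless: it contributes at most $CR^{-2}\int_{\vert y\vert\ge R/16}\vert X\vert$, which is $o(R^{-2})$. For the singular part, set $X_R = X\,\mathbf 1_{\{R/16\le\vert y\vert\le 16R\}}$ and apply Young's inequality for convolutions. Since $\vert K\vert\le C\vert z\vert^{-2}$, we have $K\mathbf 1_{B(0,32R)}\in L^q$ for $q<3/2$, with $\|K\mathbf 1_{B(0,32R)}\|_{L^q}\le CR^{3/q-2}$. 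It follows that
\[
\Big\|\int K(\cdot-y)X_R(y)\,dy\Big\|_{L^q(\an(R))}\le C R^{3/q-2}\|X_R\|_{L^1}.
\]
Multiplying by $R^2$ and normalizing by $\vert\an(R)\vert^{1/q}\sim R^{3/q}$, the scale-invariant quantity is bounded by $C\|X_R\|_{L^1}$. This tends to $0$ because $X\in L^1$ and the supports of the $X_R$ escape to infinity (by the previous proposition this is even $O(R^{-\tau})$). The restriction $q<3/2$ enters exactly at this step.

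Combining the three pieces, the $q$-th power average in the statement tends to $0$. Since $q\ge1$, the normalized $L^q$ norm and its $q$-th power tend to zero together. The main technical point to check carefully is the Young's inequality bound in the intermediate region: the truncation of $K$ to $B(0,32R)$ must be justified, which holds because $\vert x-y\vert\le 32R$ whenever $x\in\an(R)$ and $\vert y\vert\le 16R$.
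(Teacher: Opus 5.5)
Your proof is correct and is essentially the paper's argument. It rests on the same two mechanisms: the mean value bound $\vert K(x-y)-K(x)\vert\le C\vert y\vert R^{-3}$ when $\vert y\vert$ is small compared to $R$, and, for the remaining $y$, the uniform estimate $\int_{\an(R)}\vert x-y\vert^{-2q}\,dx\le CR^{3-2q}$ (valid because $q<3/2$) combined with Minkowski's integral inequality and the smallness of the $L^1$ tail of $X$.

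The differences are only organizational. The paper splits at a fixed radius $M$ chosen from $\eps$, so the near piece is $O(M/R)$ and one Minkowski estimate handles all $\vert y\vert\ge M$, far region included. You instead use scale-dependent cut-offs $R/16$ and $16R$, with an auxiliary split at $\sqrt R$. You also phrase the singular estimate as Young's inequality for the truncated kernel $K\mathbf 1_{B(0,32R)}$, which is the same computation.
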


\begin{proof}
Let $\eps > 0$ be given.  Since $X$ is $L^1$, we can select some $M$ large enough that 
\[
\int_{\vert x\vert \ge M} \vert X(x)\vert\, dx < \eps^{1/q}. 
\]
By definition of $w$ and $\overline X$, we have 
\begin{align*}
w(x) - \frac{\la x, \overline X\ra}{\vert x\vert^3} &= \int_{\R^3}  \left\la \frac{x-y}{\vert x-y\vert^3}-\frac{x}{\vert x\vert^3}, \frac{X(y)}{4\pi} \right\ra \, dy\\
&=  \int_{\vert y\vert < M}  \left\la \frac{x-y}{\vert x-y\vert^3}-\frac{x}{\vert x\vert^3}, \frac{X(y)}{4\pi} \right\ra \, dy + \int_{\vert y\vert \ge M}  \left\la \frac{x-y}{\vert x-y\vert^3}-\frac{x}{\vert x\vert^3}, \frac{X(y)}{4\pi}\right\ra \, dy\\
&:= I_{1}(x) + I_{2}(x). \phantom{\int}
\end{align*}
Now suppose that $R \ge 32M$.  We have 
\begin{align*}
\frac{1}{\vert \an(R)\vert} \int_{\an(R)} \left\vert R^2 \left(w(x) - \frac{\la x, \overline X\ra}{\vert x\vert^3}\right)\right\vert^q \, dx &= \frac{1}{\vert \an(R)\vert}\int_{\an(R)} R^{2q}\vert I_{1} + I_2\vert^q\, dx \\
&\le \frac{2^q R^{2q}}{\vert \an(R)\vert} \int_{\an(R)} \max\{\vert I_1(x)\vert,\vert I_2(x)\vert\}^q \, dx \\
&\le \frac{2^{q}R^{2q}}{\vert \an(R)\vert} \int_{\an(R)}\vert I_1(x)\vert^q + \vert I_2(x)\vert^q\, dx. 
\end{align*} 
Thus it suffices to show that 
\begin{gather*}
 \frac{R^{2q}}{\vert \an(R)\vert} \int_{\an(R)}\vert I_1(x)\vert^q\, dx < C \eps,\\
  \frac{R^{2q}}{\vert \an(R)\vert} \int_{\an(R)}\vert I_2(x)\vert^q \, dx < C \eps, 
\end{gather*} 
for sufficiently large $R$.  

First consider the term with $I_1$. Suppose $x\in \an(R)$ and $\vert y\vert \le M$. 
Then observe that 
\begin{align*}
\left\vert \frac{x-y}{\vert x-y\vert^3} - \frac{x}{\vert x\vert^3} \right\vert&= \left\vert \int_0^1 \frac{d}{dt} \left(\frac{x-ty}{\vert x-ty\vert^3}\right) \, dt\right\vert\\
&\le  \int_0^1 \left\vert 3 \frac{\la x-ty,y\ra}{\vert x-ty\vert^5}(x-ty) - \frac{y}{\vert x-ty\vert^3}\right\vert\, dt \le \frac{CM}{R^3}. 
\end{align*}
Hence we obtain 
\[
\left\vert \int_{\vert y\vert < M} \left\la \frac{x-y}{\vert x-y\vert^3}-\frac{x}{\vert x\vert^3}, \frac{X(y)}{4\pi}\right\ra \, dy\right\vert \le \frac{C M}{R^3} \|X\|_{L^1}. 
\]
Thus we have 
\begin{align*}
\frac{R^{2q}}{\vert \an(R)\vert} \int_{\an(R)} \vert I_1(x)\vert^q\, dx \le \frac{C R^{2q} M^{q} \|X\|_{L^1}^q}{R^{3q}} = \frac{CM^q \|X\|_{L^1}^q}{R^q}
\end{align*}
and this is less than $\eps$ for sufficiently large $R$. 

Now we estimate the term with $I_2$.  We note that 
\begin{align*}
\vert I_2(x)\vert &= \left\vert \int_{\vert y\vert \ge M}  \left\la \frac{x-y}{\vert x-y\vert^3}-\frac{x}{\vert x\vert^3}, \frac{X(y)}{4\pi}\right\ra \, dy\right\vert \\
&\le  \int_{\vert y\vert \ge M} \frac{\vert X(y)\vert}{4\pi \vert x\vert^2}\, dy +  \int_{\vert y\vert \ge M}  \frac{\vert X(y)\vert}{4\pi \vert x-y\vert^2} \, dy \\
&:= I_3(x) + I_4(x). \phantom{\int}   \end{align*}
Therefore by the same reasoning as above we have 
\begin{align*}
\frac{R^{2q}}{\vert \an(R)\vert} \int_{\an(R)} \vert I_2(x)\vert^q \, dx \le \frac{2^q R^{2q}}{\vert \an(R)\vert} \int_{\an(R)} \vert I_3(x)\vert^q + \vert I_4(x)\vert^q\, dx.  
\end{align*} 
Next, observe that 
\begin{align*}
\vert I_3(x)\vert =  \frac{1}{4\pi} \int_{\vert y\vert\ge M} \frac{\vert X(y)\vert}{\vert x\vert^2} \,dy \le \frac{C}{R^2} \int_{\vert y\vert \ge M} \vert X(y)\vert \, dy \le \frac{C}{R^2} \eps^{1/q}. 
\end{align*}
It follows that 
\[
\frac{R^{2q}}{\vert \an(R)\vert} \int_{\an(R)} \vert I_3(x)\vert^q\, dx \le \frac{C R^{2q} \eps}{R^{2q}} = C\eps. 
\]
Finally, it remains to consider the term with $I_4$.  Observe that 
\begin{align*}
\left[\frac{R^{2q}}{\vert \an(R)\vert} \int_{\an(R)} \vert I_4(x)\vert^q\, dx\right]^{1/q} &= \left[\int_{\an(R)} \left[\int_{\vert y\vert \ge M} \frac{R^2 \vert X(y)\vert}{4\pi \vert x-y\vert^2}\, dy \right]^q\, \frac{dx}{\vert \an(R)\vert} \right]^{1/q}\\
&\le \int_{\vert y\vert \ge M} \left[\int_{\an(R)} \left(\frac{R^2}{4\pi}\right)^q \frac{\vert X(y)\vert^q}{\vert x-y\vert^{2q}}\, \frac{dx}{\vert \an(R)\vert}\right]^{1/q}\, dy
\end{align*} 
by the Minkowski inequality for integrals.  
Now for any fixed $y$, we compute  
\begin{align*} 
\int_{\an(R)} \frac{1}{\vert x-y\vert^{2q}}\, dx &= \int_{\an(R)\cap B(y,R)} \frac{1}{\vert x-y\vert^{2q}}\, dx + \int_{\an(R) - B(y,R)} \frac{1}{\vert x-y\vert^{2q}}\, dx \\
&\le \int_{B(y,R)} \frac{1}{\vert x-y\vert^{2q}}\, dx + R^{-2q} \vert \an(R)\vert \\
&\le C R^{3-2q} \phantom{\int} 
\end{align*} 
where $C$ does not depend on $y$ and we used $1\le q < \frac 3 2$.  Hence we obtain 
\begin{align*}
&\int_{\vert y\vert \ge M} \left[\int_{\an(R)} \left(\frac{R^2}{4\pi}\right)^q \frac{\vert X(y)\vert^q}{\vert x-y\vert^{2q}}\, \frac{dx}{\vert \an(R)\vert}\right]^{1/q}\, dy \\
&\qquad = \frac{R^2}{4\pi}\int_{\vert y\vert \ge M}  \vert X(y)\vert \left[ \frac{1}{\vert \an(R)\vert} \int_{\an(R)} \frac{1}{\vert x-y\vert^{2q}}\, dx\right]^{1/q}\, dy \\
&\qquad \le C R^2 \int_{\vert y\vert\ge M} \vert X(y)\vert (R^{-2q})^{1/q}\, dy \\
&\qquad = C \int_{\vert y\vert\ge M} \vert X(y)\vert\, dy \le C\eps^{1/q}. 
\end{align*}
It follows that 
\[
\frac{R^{2q}}{\vert \an(R)\vert} \int_{\an(R)} \vert I_4(x)\vert^q\, dx \le C\eps
\]
for large $R$.  
Putting everything together, we see that 
\[
\frac{1}{\vert \an(R)\vert} \int_{\an(R)} \left\vert R^2 \left(w(x) - \frac{\la x, \overline X\ra}{\vert x\vert^3}\right)\right\vert^q\, dx \le  C\eps
\]
for large $R$.  This proves the result. 
\end{proof} 

\subsection{The Harmonic Remainder} 
Next, define $h = u - w$.  Outside of a compact set containing the origin, the function $h$ solves $\lap h = 0$ in the sense of distributions, and therefore $h$ is actually a smooth, harmonic function.  We now show that $h$ decays suitably at infinity. 

\begin{prop} 
We have 
\[
\frac{1}{\vert \an(R)\vert} \int_{\an(R)} \vert h\vert\, dx \to 0
\]
as $R\to \infty$. 
\end{prop}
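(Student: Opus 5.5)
The plan is to bound the averages of $\vert u\vert$ and $\vert w\vert$ over $\an(R)$ separately and then use the triangle inequality $\vert h\vert \le \vert u\vert + \vert w\vert$. For $u$, the two-sided bound \eqref{initial-two-side-bound} gives $\vert u(x)\vert \le C/\vert x\vert \le 8C/R$ on $\an(R)$. Hence
\[
\frac{1}{\vert \an(R)\vert}\int_{\an(R)} \vert u\vert\, dx \le \frac{C}{R} \to 0.
\]

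For $w$, I would use Proposition \ref{annulus-estimate} with $q=1$. It says that the average of $R^2\bigl\vert w - \la x,\overline X\ra\vert x\vert^{-3}\bigr\vert$ over $\an(R)$ tends to zero. In particular this average is bounded, so the average of $\bigl\vert w - \la x,\overline X\ra\vert x\vert^{-3}\bigr\vert$ is $o(R^{-2})$. On $\an(R)$ we also have $\vert \la x,\overline X\ra\vert x\vert^{-3}\vert \le \vert\overline X\vert \vert x\vert^{-2} \le 64\vert \overline X\vert R^{-2}$. Combining these,
\[
\frac{1}{\vert \an(R)\vert}\int_{\an(R)} \vert w\vert\, dx \le o(R^{-2}) + CR^{-2} \to 0.
\]

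Adding the two estimates gives the claim. There is no real obstacle here. The only points to check are that Proposition \ref{annulus-estimate} may be used with $q=1$ (it may, since $1\le q<\tfrac32$), and that $h=u-w$ holds almost everywhere on $\an(R)$ for large $R$, which is true because $w\in L^1_{loc}$ and $h$ is defined as $u-w$. The statement is not sharp; it is only the weak decay input that will later let us identify $h$ with its harmonic expansion at infinity, for example via the mean value property on balls in $\an(R)$.
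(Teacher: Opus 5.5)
Your proof is correct and follows essentially the paper's argument. Both use the triangle inequality $\vert h\vert \le \vert u\vert + \vert w\vert$, the bound $u \le C/R$ on $\an(R)$ from the two-sided Green's function estimate, and a split of $w$ into $w - \la x,\overline X\ra\vert x\vert^{-3}$, controlled by Proposition \ref{annulus-estimate} with $q=1$, plus the dipole term bounded by $CR^{-2}$.
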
 

\begin{proof} 
Observe that 
\begin{align*}
\frac{1}{\vert \an(R)\vert}\int_{\an(R)}  \vert h\vert\, dx &\le \frac{1}{\vert \an(R)\vert} \int_{\an(R)} \vert u\vert + \vert w\vert\, dx \\
&\le \frac{1}{\vert \an(R)\vert} \int_{\an(R)} \frac{C}{R} + \left\vert w(x) - \frac{\la x,\overline X\ra}{\vert x\vert^3}\right\vert+ \left\vert \frac{\la x,\overline X\ra}{\vert x\vert^3}\right\vert \, dx\\
&\le \frac{1}{\vert \an(R)\vert} \int_{\an(R)} \frac{C}{R} + \left\vert w(x) - \frac{\la x,\overline X\ra}{\vert x\vert^3}\right\vert  + \frac{C}{R^2}\, dx.
\end{align*} 
This goes to 0 as $R\to \infty$ by Proposition \ref{annulus-estimate} with $q=1$. 
\end{proof}

 In fact, this implies that $h$ goes to 0 at infinity in a pointwise sense. 

\begin{prop}
We have $\vert h(x)\vert \to 0$ as $\vert x\vert \to \infty$. 
\end{prop}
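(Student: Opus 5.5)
The plan is to combine the mean value property for harmonic functions with the averaged decay just established in the previous proposition. Since $h$ is smooth and harmonic outside some compact set $K \subset B(0,R_0)$, for any point $x$ with $\vert x\vert = R$ sufficiently large, the ball $B(x, R/2)$ is contained in $\R^3\setminus K$. Moreover, $B(x,R/2)$ lies inside the annulus $\an(R)$, since its points have norm between $R/2$ and $3R/2$, well within $[R/8, 8R]$.

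By the mean value property,
\[
h(x) = \frac{1}{\vert B(x,R/2)\vert} \int_{B(x,R/2)} h(y)\, dy.
\]
Hence
\[
\vert h(x)\vert \le \frac{1}{\vert B(x,R/2)\vert}\int_{\an(R)} \vert h\vert\, dy = \frac{\vert \an(R)\vert}{\vert B(x,R/2)\vert}\cdot \frac{1}{\vert \an(R)\vert}\int_{\an(R)} \vert h\vert\, dy.
\]
The ratio $\vert \an(R)\vert/\vert B(x,R/2)\vert$ is a fixed constant independent of $R$, because both volumes scale like $R^3$. Therefore $\vert h(x)\vert \le C \cdot \frac{1}{\vert \an(R)\vert}\int_{\an(R)}\vert h\vert$, which tends to $0$ as $R = \vert x\vert\to\infty$ by the previous proposition.

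The only point needing care is that $h$ is genuinely a (smooth) harmonic function on the relevant balls, rather than merely an $L^1_{loc}$ distributional solution. This was already noted: by Weyl's lemma, a distributional solution of $\lap h = 0$ is smooth and harmonic, and it therefore agrees almost everywhere with a smooth representative. Using this representative, the mean value identity holds, and the averaged integral is unaffected by null sets. There is no real obstacle beyond checking the containment $B(x,R/2)\subset \an(R)\setminus K$ for large $R$.
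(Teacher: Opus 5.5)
Your proposal is correct and is essentially the paper's argument: apply the mean value property on a ball of radius comparable to $\vert x\vert$ contained in $\an(R)$, then invoke the averaged decay from the previous proposition. The paper takes $\vert x\vert = 2R$ with $B(x,R)$, while you take $\vert x\vert = R$ with $B(x,R/2)$; this difference is immaterial.
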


\begin{proof}
Select a point $x$ with $\vert x\vert = 2R$ and note that $B(x,R) \subset \an(R)$. By the mean value property of harmonic functions, we have 
\[
\vert h(x)\vert \le \frac{1}{\vert B(x,R)\vert}\int_{B(x,R)} \vert h(y)\vert\, dy \le \frac{C}{\vert \an(R)\vert} \int_{\an(R)} \vert h(y)\vert\, dy. 
\]
We have seen that the average on the right goes to 0, and it follows that $\vert h(x)\vert\to 0$ as $\vert x\vert\to \infty$. 
\end{proof}

Now since $h$ is a harmonic function outside a compact set which goes to 0 at infinity, there is an expansion 
\[
h(x) = \frac{c}{\vert x\vert} + \frac{\la b,x\ra}{\vert x\vert^3} + O(\vert x\vert^{-3}), \quad \text{as } \vert x\vert\to \infty
\]
for some $c > 0$ and some $b\in \R^3$.

\subsection{The Asymptotic Expansion}

Next, we want to obtain convergence in a stronger space. Recall that $w = u - h$. Define 
\[
v = w - \frac{\la x, \overline X\ra}{\vert x\vert^3} = u - h - \frac{\la x, \overline X\ra}{\vert x\vert^3}.
\]
Note that $\la x,\overline X\ra \vert x\vert^{-3}$ is harmonic. 
Therefore we have 
\begin{align*}
\div(A\grad v) &= \div(A\grad u) - \div(A\grad h) - \div\left(A \grad \frac{\la x,\overline X\ra}{\vert x\vert^3}\right)  \\
&= - \div(A\grad h) - \div\left(A \grad \frac{\la x,\overline X\ra}{\vert x\vert^3}\right)\\
& = \div\left(B\grad \left( h +  \frac{\la x,\overline X\ra}{\vert x\vert^3}\right) \right). 
\end{align*}
 Define the vector field 
 \[
 Y = B\grad \left(h + \frac{\la x,\overline X\ra}{\vert x\vert^3}\right).
 \]
Using $B = O(\vert x\vert^{-1-\tau})$ and the asymptotic expansion for $h$, we see that 
\[
\vert Y\vert = O(\vert x\vert^{-3-\tau}). 
\]
It is now convenient to rescale everything to a fixed annulus.   

Define  $w_R(y) = R^2 w(Ry)$ and $v_R(y) = R^2 v(Ry)$.   Using the change of variables $y = x/R$, the scale invariant annular estimate for $w$ (Proposition \ref{annulus-estimate}) becomes 
\[
\int_{\an(1)} \left\vert w_R(y) - \frac{\la y, \overline X\ra}{\vert y\vert^3}\right\vert^q\, dy = \int_{\an(1)} \vert v_R(y)\vert^q\, dy \to 0. 
\]
Further define $A_R(y) = A(Ry)$ and $Y_R(y) = R^3Y(Ry)$.
Then the equation $\div(A\grad v) = \div(Y)$ becomes 
\begin{equation}
\label{v-eqn}
\div\left(A_R(y)\grad  v_R(y)\right) = \div(Y_R(y))
\end{equation}
after rescaling. 
Moreover, we have the decay estimates $\vert A_R(y) - I\vert \le CR^{-1-\tau}$ and $\vert Y_R(y)\vert \le C R^{-\tau}$ as $R\to \infty$. 

Define $\an_2 = B(0,4)-B(0,1)$ and $\an_1 = B(0,5)-B(0,1/4)$ so that $\an_2 \subset\subset \an_1 \subset\subset \an(1)$. Fix some $p > 3$.  Since $v$ satisfies \eqref{v-eqn} and $A_R$ is arbitrarily close to $I$ for large $R$, Meyers' estimate \cite[Theorem 2]{meyers1963p} shows that 
\[
\|\grad v_R\|_{L^p(\an_2)} \le C (\|v_R\|_{L^2(\an_1)} + \|Y_R\|_{L^\infty(\an_1)}).
\]
Moreover, the De Giorgi-Nash-Moser theory (see \cite[Theorem 8.17]{gilbarg1998elliptic}) implies that 
\[
\|v_R\|_{L^\infty(\an_1)} \le C(\|v_R\|_{L^q(\an(1))} + \|Y_R\|_{L^\infty(\an(1))})
\]
for any fixed $q > 1$. 
Combining this with the previous estimate and 
\begin{gather*} 
\|v_R\|_{L^q(\an(1))}\to 0 \text{ for } 1 \le q < \frac 3 2,\\ \|Y_R\|_{L^\infty(\an(1))} \to 0,
\end{gather*}
 we obtain $v_R \to 0$ in $W^{1,p}(\an_2)$.

Let us summarize the consequences for the function $u$.  From now on, let us write $\an = \an_2 = B_4 - B_1$.  This is the only annulus we need to consider in the remainder of the paper. Recall that $u$ is equal to $w + h$.  Define the rescaled function $u_R(y) = Ru(Ry)$.  Then for $y\in \an$ we have 
\begin{align*}
u_R(y) &= Rw(Ry) + Rh(Ry) \\
&= \frac{c}{\vert y\vert} + \frac{1}{R}\left[\frac{\la b,y\ra}{\vert y\vert^3} + w_R(y)\right] + O(R^{-2}). 
\end{align*}
Moreover, the $O(R^{-2})$ term comes purely from the asymptotic expansion of $h$ and so we also have good estimates for the derivative of this term.  

Recall that we are assuming that the flux of $\grad u$ over any level set is $4\pi$. This implies that $c = 1$.   

\begin{prop}
    In the above formula, one has $c = 1$. 
\end{prop}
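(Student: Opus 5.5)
The plan is to compute the flux of $A\grad u$ through a large sphere in two ways and compare. First, the normalization gives the flux. For $\eta$ a cutoff equal to $1$ on a neighborhood of the origin and compactly supported, the weak form of $\mathcal L u = \delta_0$ (with our normalization) states that
\[
\int \la A\grad u, \grad \eta\ra\, dx = -4\pi.
\]
This is the normalization ``flux over any level set is $4\pi$'', rewritten in weak form via the coarea formula; it is independent of the choice of $\eta$ because $u$ is $\mathcal L$-harmonic away from $0$. I will take $\eta = \eta_R(x) = \phi(|x|/R)$, where $\phi$ is a fixed radial profile equal to $1$ on $[0,1]$, equal to $0$ on $[4,\infty)$, and smooth in between. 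Then $\grad\eta_R$ is supported in $R\cdot\an$ and satisfies $|\grad \eta_R|\le C/R$.

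Next, I rescale to the fixed annulus $\an$. Set $y=x/R$ and $u_R(y)=Ru(Ry)$, so that $\grad u(Ry) = R^{-2}\grad u_R(y)$ and $\grad_x\eta_R = R^{-1}\grad\phi(|y|)$. This gives
\[
-4\pi = \int_{\an} \la A_R\grad u_R, \grad \phi(|y|)\ra\, dy.
\]
I then let $R\to\infty$, using three facts.
\begin{enumerate}[(i)]
\item $A_R\to I$ uniformly on $\an$, since $|A_R-I|\le CR^{-1-\tau}$.
\item $\grad u_R \to \grad (c/|y|)$ in $L^p(\an)$. This follows from the expansion above: the $h$-part contributes $c/|y|$ plus terms of size $O(R^{-1})$ in $C^1$, while $R^{-1}\grad w_R$ tends to $0$ in $L^p$ because $w_R = v_R + \la y,\overline X\ra|y|^{-3}$ and $v_R\to 0$ in $W^{1,p}(\an)$.
\item $\grad\phi$ is bounded.
\end{enumerate}
Together these yield
\[
-4\pi = \int_{\an}\Big\la \grad \frac{c}{|y|}, \grad\phi\Big\ra\, dy = -c\int_1^4 \phi'(r)\,\frac{1}{r^2}\cdot 4\pi r^2\, dr = 4\pi c\,(\phi(1)-\phi(4))\cdot(-1),
\]
which simplifies to $-4\pi c$. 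Hence $c=1$.

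The main point requiring care is justifying the weak flux identity with the constant exactly $-4\pi$ from the stated normalization. I would handle it as follows. Since $\mathcal L u=0$ away from $0$, the integral is independent of $\eta$. Choosing $\eta=\psi(u)$ for a smooth $\psi$ equal to $1$ for large values and $0$ for small values (permissible since $u\sim 1/|x|$ by \eqref{initial-two-side-bound}), the coarea formula expresses the integral as $\int\psi'(s)\,(\text{flux through }\{u=s\})\,ds$. By the normalization, this equals $4\pi\int\psi' = 4\pi$ up to the sign convention.
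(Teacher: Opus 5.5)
Your proof is correct, but it takes a different route from the paper's.

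\textbf{The paper's route.} The paper stays on the rescaled level sets. It notes $\int_{\{u_R=t\}}|\nabla^{g_R}u_R|\,da_{g_R}=4\pi$ and integrates this against a bump $\eta(t)$ supported in $(c/3,c/2)$. By co-area this gives $\int\eta(u_R)|\nabla^{g_R}u_R|^2\,dv_{g_R}=4\pi$, and the paper then passes to the limit in this nonlinear expression. That needs:
\begin{enumerate}[(a)]
\item uniform convergence $u_R\to c|y|^{-1}$, obtained from $W^{1,p}$ with $p>3$, to confine the support of $\eta(u_R)$ to $B(0,3)-B(0,2)$ and to handle $\eta(u_R)$;
\item $L^2$ convergence of the gradients;
\item $C^0$ convergence of $g_R$;
\item the a priori sign $c>0$, in order to choose the interval.
\end{enumerate}

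\textbf{Your route.} You use the divergence structure once, before rescaling, to turn the level-set normalization into a weak flux identity valid for every cutoff. You then test against a fixed radial profile $\phi$. Because the test function does not depend on $R$, the quantity is linear in $\nabla u_R$. So you only need $A_R\to I$ uniformly and $\nabla u_R\to\nabla(c|y|^{-1})$ in $L^1(\an)$, with no uniform convergence of $u_R$ and no sign information on $c$.

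Your route could even bypass the Meyers and De Giorgi--Nash--Moser upgrade for this step. Integrate the Euclidean part by parts onto $\Delta\phi(|y|)$. Bound the $B_R$ part by $CR^{-1-\tau}\|\nabla u_R\|_{L^2(\an)}$, which is bounded by the rescaled Proposition~\ref{caccioppoli}. Then you would only need $u_R\to c|y|^{-1}$ in $L^1(\an)$, which is Proposition~\ref{annulus-estimate} with $q=1$ plus the expansion of $h$. What the paper's version buys in return is that it uses the normalization exactly as stated. It avoids your intermediate step of showing that $\int\langle A\nabla u,\nabla\eta\rangle\,dx$ is independent of $\eta$ and equal to the level-set flux.

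\textbf{Sign slips.} You have two sign errors, and they cancel.
\begin{enumerate}[(a)]
\item With $\eta=1$ near the origin, the identity is $\int\langle A\nabla u,\nabla\eta\rangle\,dx=+4\pi$. Your own co-area justification gives this, since $\psi$ increases from $0$ to $1$ and so $4\pi\int\psi'=4\pi$; equivalently, $\Delta|x|^{-1}=-4\pi\delta_0$.
\item The limit is $-c\int_1^4 4\pi\phi'(r)\,dr=4\pi c(\phi(1)-\phi(4))=+4\pi c$, not $-4\pi c$.
\end{enumerate}
Since both sides are the same expression evaluated consistently, you still get $4\pi=4\pi c$, hence $c=1$.
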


\begin{proof} Define metrics $g_R(y) = g(Ry)$ and note that $g_R \to g_{\text{euc}}$ in $C^0$ on $\an$ as $R\to \infty$.
For almost-every $t\in (\frac{c}{3},\frac{c}{2})$, we have 
\[
\int_{\{u_R=t\}} \vert \grad^{g_R} u_R(y)\vert \, da_{g_R}(y)= \int_{\{u=t/R\}} \vert \grad^g u(x)\vert \, da_g(x) = 4\pi
\]
since we assumed that the flux of $\grad u$ over any level set is $4\pi$. Let $\eta$ be a smooth, non-negative, bump function supported in $(\frac c 3,\frac c 2)$ with 
\[
\int_{c/3}^{c/2} \eta(t)\, dt = 1. 
\]
The $W^{1,p}$ convergence with $p > 3$ implies that $u_R \to c\vert y\vert^{-1}$ in $C^0$.  Thus the co-area formula implies that 
\begin{align*}
    \int_{B(0,3)-B(0,2)} \eta(u_R)\vert \grad u_R\vert^2\,d v_{g_R} = \int_{c/3}^{c/2} \eta(t) \left[\int_{\{u_R = t\}} \vert \grad u_R\vert \,da_{g_R}\right]\, dt = 4\pi. 
\end{align*}
On the other hand, since $u_R\to c\vert y\vert^{-1}$ in $W^{1,p}$ and $g_R\to g_{\text{euc}}$ in $C^0$, we have 
\[
\int_{B(0,3)-B(0,2)} \eta(u_R)\vert \grad u_R\vert^2\,d v_{g_R} \to \int_{B(0,3)-B(0,2)} \eta\left(\frac{c}{\vert y\vert}\right) \frac{c^2}{\vert y\vert^4}\, dy.
\]
The co-area formula gives 
\[
\int_{B(0,3)-B(0,2)} \eta\left(\frac{c}{\vert y\vert}\right) \frac{c^2}{\vert y\vert^4}\, dy = 4\pi \int_2^3 \eta\left(\frac{c}{r}\right)\frac{c^2}{r^2}\, dr = 4\pi c \int_{c/3}^{c/2} \eta(t) \,dt = 4\pi c, 
\]
and hence it must be that $c=1$. 
\end{proof}

Thus we now have demonstrated the following: 

\begin{prop} 
\label{asymptotics} 
The functions $u_R$ satisfy 
\[
u_R(y) \to \frac{1}{\vert y\vert}, \quad R\left(u_R(y) - \frac 1 {\vert y\vert}\right) \to \frac{\la b + \overline X, y\ra}{\vert y\vert^3}
\]
as $R\to \infty$ in $W^{1,p}(\an)$. 
\end{prop}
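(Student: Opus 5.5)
The plan is to assemble Proposition \ref{asymptotics} from the decomposition $u = w + h$ and the convergence $v_R \to 0$ in $W^{1,p}(\an)$ already established. First write the rescaled Green's function in closed form. By definition, $w_R(y) = R^2 w(Ry)$ and
\[
v_R(y) = w_R(y) - \frac{\la y,\overline X\ra}{\vert y\vert^3}.
\]
The expansion of the harmonic remainder (with $c=1$) gives
\[
Rh(Ry) = \frac{1}{\vert y\vert} + \frac{1}{R}\frac{\la b,y\ra}{\vert y\vert^3} + E_R(y),
\]
where $E_R(y) = R\,\rho(Ry)$ and $\rho = h - \vert x\vert^{-1} - \la b,x\ra\vert x\vert^{-3}$. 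Combining these,
\[
R\left(u_R(y) - \frac{1}{\vert y\vert}\right) = \frac{\la b+\overline X,y\ra}{\vert y\vert^3} + v_R(y) + R\,E_R(y).
\]
So the second convergence reduces to showing $v_R\to 0$ and $R E_R\to 0$ in $W^{1,p}(\an)$. The first convergence then follows from the second: dividing by $R$ gives $\Vert u_R - \vert y\vert^{-1}\Vert_{W^{1,p}(\an)} = O(R^{-1})$.

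The term $v_R \to 0$ in $W^{1,p}(\an_2)$ is exactly what was shown just above using the Meyers and De Giorgi--Nash--Moser estimates, so nothing more is needed there.

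For the remainder, I will use that $h$ is harmonic outside a compact set and tends to zero. Its spherical harmonic (Kelvin transform) expansion then converges together with all derivatives, so $\rho = O(\vert x\vert^{-3})$ and $\grad\rho = O(\vert x\vert^{-4})$. Rescaling, on $\an$ we get $\vert R E_R(y)\vert \le R^2 \cdot C R^{-3} = C R^{-1}$. Likewise $\vert \grad_y (R E_R)(y)\vert = R^3\vert(\grad\rho)(Ry)\vert \le C R^{-1}$. Hence $R E_R \to 0$ uniformly in $C^1(\an)$, and in particular in $W^{1,p}(\an)$.

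The only genuinely delicate point is justifying the derivative bound $\grad \rho = O(\vert x\vert^{-4})$. This is the remark in the text that the $O(R^{-2})$ term has good derivative estimates. I would handle it with interior gradient estimates for harmonic functions: $\rho$ is harmonic on $\{\vert x\vert > R_0\}$ and $\vert\rho\vert \le C\vert x\vert^{-3}$, so applying the gradient estimate on balls $B(x,\vert x\vert/2)$ gives $\vert\grad\rho(x)\vert \le C\vert x\vert^{-1}\sup_{B(x,\vert x\vert/2)}\vert\rho\vert \le C\vert x\vert^{-4}$. This also confirms that the constant $c$ appearing in the expansion of $h$ is the same $c$ shown to equal $1$ in the preceding proposition. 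With these pieces, both limits in Proposition \ref{asymptotics} hold in $W^{1,p}(\an)$.
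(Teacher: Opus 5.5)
Your proof is correct and follows essentially the same route as the paper. You write $u_R(y) = Rw(Ry) + Rh(Ry)$, invoke the already-established convergence $v_R \to 0$ in $W^{1,p}(\an)$ together with $c=1$, and control the remainder from the multipole expansion of $h$; your interior gradient estimate on $B(x,\vert x\vert/2)$, giving $\vert \grad\rho\vert \le C\vert x\vert^{-4}$, simply makes explicit the derivative bound on the $O(R^{-2})$ term that the paper asserts without proof.
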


\section{Estimating the Mass} 

\label{section-mass}

Assume $g$ is a smooth Riemannian metric on $\R^3$ which satisfies the $C^0$ decay estimate 
\[
\vert g(x) - g_{\text{euc}}(x)\vert = O(\vert x\vert^{-1-\tau})
\]
for some $\tau > 0$. Assume in addition now that $g$ has non-negative scalar curvature.  Let $u$ be a Green's function for $\lap_g$ with pole at the origin as in the previous section. 

\subsection{Definitions} 
Consider the function 
\[
F(t) = 4\pi t - t^2 \int_{\{ u = \frac 1 t\}} H_g\vert \grad^g u\vert\ da_g + t^3 \int_{\{u = \frac 1 t \}} \vert \grad^g u\vert^2 \, da_g. 
\]
See Appendix \ref{F-Appendix} for a summary of the basic properties of the $F$ function, which was first introduced in \cite{agostiniani2024green}. In particular, note that since $g$ has non-negative scalar curvature, the function $F$ is monotone non-decreasing and satisfies $F \ge 0$.  

Following \cite{mazurowski2026quantification}, we now define the $E$ and $D$ functions. First, for $a > 0$ and $s\in[0,a]$, the $E$ function is given by  
\begin{align*}
E(a,s) &= \int_{a+s}^{2a+2s} \frac{F(t)}{t^3}\, dt. 
\end{align*} 
As in \cite[Section 4]{mazurowski2026quantification}, one can use the identity 
\[
\frac{d}{dt} \int_{\{u=\frac 1 t\}} \vert \grad^g u\vert^2\, da_g = -t^{-2}\int_{\{u=\frac 1 t\}} H_g \vert \grad^g u\vert\, da_g
\]
from Proposition \ref{ac-gradient} to verify that 
\[
E(a,s) = \frac{2\pi}{a+s} + \int_{\{u = \frac{1}{2a+2s}\}} \frac{\vert \grad^g u\vert^2}{u}\, da_g - \int_{\{u = \frac{1}{a+s}\}} \frac{\vert \grad^g u\vert^2}{u}\, da_g. 
\]
Next we define the $D$ function. 

In fact, it is convenient to slightly modify the definition of the $D$ function from \cite{mazurowski2026quantification} so that it is easier to work with later.  Fix a smooth, non-negative function $\psi \colon \R\to \R$ which is compactly supported in $(0,1)$ and satisfies 
\[
\int_{0}^1 \psi(s)\, ds = 1. 
\]
Then for $a > 0$ define 
\[
D(a) = \int_0^a \psi(s/a) E(a,s)\, ds. 
\]
Define the constant 
\[
c_\psi = 2\pi \int_0^1 \frac{\psi(s)}{1+s}\, ds.
\]
Using the co-area formula, we compute that 
\begin{align*}
D(a) &= c_\psi + \frac{1}{2} \int_{\{\frac 1 {4a} \le u \le \frac{1}{2a}\}} \psi\left(\frac{1}{2au} - 1\right) \frac{\vert \grad^g u\vert^3}{u^3}\, dv_g - \int_{\{\frac{1}{2a} \le u \le \frac 1 a \}} \psi\left(\frac{1}{au}-1\right) \frac{\vert \grad^g u\vert^3}{u^3}\, dv_g\\
&= c_\psi + \frac{1}{2} \int \psi\left(\frac{1}{2au} - 1\right) \frac{\vert \grad^g u\vert^3}{u^3}\, dv_g - \int \psi\left(\frac{1}{au}-1\right) \frac{\vert \grad^g u\vert^3}{u^3}\, dv_g\\
&= c_\psi + \int \left[\frac 1 2 \psi\left(\frac{1}{2au} - 1\right) - \psi\left(\frac{1}{au} - 1\right)\right] \frac{\vert \grad^g u\vert^3}{u^3}\, dv_g,
\end{align*} 
where we used the fact that $\psi$ is compactly supported in $(0,1)$ to get from the firs to the second  line. 

\subsection{Monotonicity} 
The quantity $D(a)$ inherits monotonicity and non-negativity properties from $F$.  More precisely, we have the following results. 

\begin{prop}
We have $D(a) \ge 0$ for all $a > 0$. 
\end{prop}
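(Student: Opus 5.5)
The plan is to prove $D(a)\ge 0$ directly from the definitions, using only the non-negativity of $F$ recorded in Appendix \ref{F-Appendix}; no asymptotic information is needed. First I will check that $E(a,s)\ge 0$ for every $a>0$ and $s\in[0,a]$. By definition,
\[
E(a,s) = \int_{a+s}^{2a+2s} \frac{F(t)}{t^3}\, dt,
\]
and here $t>0$ and $F(t)\ge 0$, because $g$ has non-negative scalar curvature. The integrand is therefore non-negative and $E(a,s)\ge 0$.

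Next I will use the definition
\[
D(a) = \int_0^a \psi(s/a) E(a,s)\, ds.
\]
Since $\psi\ge 0$ and $E(a,s)\ge 0$ on $[0,a]$, the integrand is non-negative, and so $D(a)\ge 0$.

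The only point needing care is that these integrals are meaningful, which requires $F$ and $E(a,\cdot)$ to be measurable and locally integrable. $F$ is monotone non-decreasing by the Appendix, hence measurable and locally bounded on $(0,\infty)$, so $E(a,s)$ is a finite number for each $s$. Moreover $E(a,s)$ is continuous in $s$, since it is an integral of a locally bounded function whose endpoints vary continuously in $s$. Hence $s\mapsto \psi(s/a)E(a,s)$ is integrable on $[0,a]$.

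I do not expect a real obstacle here; the whole content is the non-negativity of $F$, which is taken from the Appendix and ultimately from \cite{agostiniani2024green}. The alternative co-area expression for $D(a)$ is not needed for this proposition, so I will avoid it, since its sign is not evident from that formula.
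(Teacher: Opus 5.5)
Your proposal is correct and matches the paper's proof, which likewise deduces $D(a)\ge 0$ directly from $F\ge 0$ via $E(a,s)=\int_{a+s}^{2a+2s}F(t)t^{-3}\,dt\ge 0$ and $\psi\ge 0$. Your added remarks on measurability and integrability, using that $F$ is monotone and hence locally bounded, are a harmless extra that the paper leaves implicit.
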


\begin{proof}
This follows immediately from the fact that $F(t) \ge 0$ for all $t > 0$. 
\end{proof}

\begin{prop}
The function $a \mapsto a D(a)$ is non-decreasing. 
\end{prop}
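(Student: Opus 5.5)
We want to show $a\mapsto aD(a)$ is non-decreasing. The plan is to express $aD(a)$ as an average of integrals of $F(t)/t^3$ over dilated intervals, and then exploit the monotonicity of $F$ together with the scaling of $t^{-3}$.

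First rewrite $E$ with the substitution $t = (a+s)\sigma$:
\[
E(a,s) = \int_1^2 \frac{F((a+s)\sigma)}{(a+s)^3\sigma^3}\,(a+s)\, d\sigma = \frac{1}{(a+s)^2}\int_1^2 \frac{F((a+s)\sigma)}{\sigma^3}\, d\sigma.
\]
Next substitute $s = a r$ with $r\in[0,1]$ in the definition of $D$:
\[
D(a) = a\int_0^1 \psi(r) E(a,ar)\, dr = a\int_0^1 \frac{\psi(r)}{a^2(1+r)^2}\int_1^2 \frac{F(a(1+r)\sigma)}{\sigma^3}\, d\sigma\, dr.
\]
Multiplying by $a$ gives
\[
aD(a) = \int_0^1 \frac{\psi(r)}{(1+r)^2}\int_1^2 \frac{F(a(1+r)\sigma)}{\sigma^3}\, d\sigma\, dr.
\]

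In this form the weights $\psi(r)(1+r)^{-2}\sigma^{-3}$ are non-negative and independent of $a$. The only $a$-dependence sits in $F(a(1+r)\sigma)$. Since $F$ is monotone non-decreasing (recorded in Appendix \ref{F-Appendix}, using $R_g\ge 0$) and $\psi\ge 0$, increasing $a$ increases each integrand pointwise, so $aD(a)$ is non-decreasing.

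The only point needing care is justifying the changes of variables: we need $F$ to be measurable and locally integrable on $(0,\infty)$. This follows from the basic properties of $F$ in the appendix: it is monotone, hence measurable and locally bounded. So I expect no real obstacle; the main step is simply spotting the rescaling that makes the $a$-dependence enter only through the argument of $F$.
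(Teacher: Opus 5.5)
Your proposal is correct and follows the paper's proof essentially verbatim: the paper also substitutes $t=(a+s)u$ in the inner integral and $v=s/a$ in the outer one, arriving at $aD(a)=\int_0^1 \frac{\psi(v)}{(1+v)^2}\int_1^2 \frac{F((1+v)au)}{u^3}\,du\,dv$, and then concludes from the monotonicity of $F$. Your remark on measurability, namely that a monotone $F$ is measurable and locally bounded, is a fine justification of the changes of variables and is a point the paper leaves implicit.
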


\begin{proof}
We have 
\[
a D(a) = \int_0^a a \psi(s/a)\left[ \int_{a+s}^{2a+2s} \frac{F(t)}{t^3}\,dt\right]\, ds. 
\]
In the inner integral, we make the change of variables $t = u(a+s)$ to get 
\[
\int_{a+s}^{2a+2s} \frac{F(t)}{t^3}\,dt = \frac{1}{(a+s)^2} \int_1^2 \frac{F((a+s)u)}{u^3}\, du.
\]
Thus we get 
\[
aD(a) = \int_0^a \frac{a \psi(s/a)}{(a+s)^2} \left[ \int_1^2 \frac{F((a+s)u)}{u^3}\, du\right]\, ds. 
\]
Now we make the substitution $v = s/a$ in the outer integral to get 
\[
aD(a) = \int_0^1 \frac{\psi(v)}{(1+v)^2} \left[\int_1^2 \frac{F((1+v)au)}{u^3}\, du\right]\, dv. 
\]
Therefore $aD(a)$ is a non-decreasing function because $F$ is a non-decreasing function. 
\end{proof}

\subsection{Limits at Infinity} We now want to prove Theorem \ref{theorem-main}.  In fact, it suffices to show that $aD(a) \to 0$.  Indeed, if $aD(a) \to 0$, then it follows from the monotonicity that $D(a) \equiv 0$ and therefore that $F(t) \equiv 0$.  But it is known that $F(t)\equiv 0$ implies that $g$ is flat; see Proposition \ref{F-rigidity}.   

We now turn to the proof that $aD(a) \to 0$ as $a\to \infty$. 
It is convenient to rescale everything to a fixed annulus.  Define $u_a(y) = a u(ay)$ and $g_a(y) = g(ay)$.  Then we have 
\[
D(a) = c_\psi + \int \left[\frac 1 2 \psi\left(\frac{1}{2u_a(y)} - 1\right) - \psi\left(\frac{1}{u_a(y)}-1\right)\right] \frac{\vert \grad^{g_a} u_a(y)\vert^3}{u_a(y)^3} \, dv_{g_a}(y). 
\]
For notational convenience, let 
\[
\varphi(t) = \frac{1}{t^3}\left[\frac 1 2 \psi\left(\frac 1{2t}-1\right) - \psi\left(\frac 1 t - 1\right)\right] 
\]
so that 
\[
D(a) = c_\psi + \int \varphi(u_a(y)) {\vert \grad^{g_a} u_a(y)\vert^3}\, dv_{g_a}(y). 
\]
We now study this quantity using the expansions from the previous section. 

First we make an observation.  Recall that $u_a \to \vert y\vert^{-1}$ in $W^{1,p}(\an)$ as $a\to \infty$. In particular, this implies that $u_a \to \vert y\vert^{-1}$ in $C^0(\an)$. Since $\varphi$ is compactly supported in $(1/4,1)$, it follows that 
\[
D(a) = c_\psi + \int_{\an} \varphi(u_a(y)) {\vert \grad^{g_a} u_a(y)\vert^3}\, dv_{g_a}(y)
\]
for sufficiently large $a$. In other words, we can take the domain of integration to be the fixed compact set $\an = B(0,4) - B(0,1)$ that does not depend on $a$.  

We also introduce some notation. Let $\mathcal G(\an)$ be the set of all continuous Riemannian metrics on the compact set $\an$.  We define a functional 
\begin{gather*}
\mathcal D\colon \mathcal G(\an) \times W^{1,p}(\an) \to \R,\\
\mathcal D(h,f) = c_\psi + \int_{\an} \varphi(f) {\vert \grad^{h} f\vert^3} \, dv_h. 
\end{gather*} 
It is straightforward to verify that $\mathcal D$ is jointly continuous in the pair $(h,f)$ provided $p > 3$.  This immediately implies the following. 

\begin{prop}
We have $D(a) \to 0$ as $a\to \infty$. 
\end{prop}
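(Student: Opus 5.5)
The approach is to use the joint continuity of $\mathcal D$ on $\mathcal G(\an)\times W^{1,p}(\an)$ and evaluate it at the limiting pair. For large $a$ we have $D(a) = \mathcal D(g_a, u_a)$. By Proposition \ref{asymptotics}, $u_a \to \vert y\vert^{-1}$ in $W^{1,p}(\an)$. The decay hypothesis $\vert g - g_{\text{euc}}\vert = O(\vert x\vert^{-1-\tau})$ gives $g_a \to g_{\text{euc}}$ uniformly on $\an$, i.e.\ in $\mathcal G(\an)$. Continuity therefore yields
\[
D(a) \to \mathcal D(g_{\text{euc}}, \vert y\vert^{-1}),
\]
and it remains to show that this limit is $0$.

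Before that, I would briefly justify the joint continuity, since the paper only calls it straightforward. Because $p>3$, Morrey's inequality turns $W^{1,p}$ convergence into $C^0$ convergence. Hence $\varphi(f_k)\to\varphi(f)$ uniformly, as $\varphi$ is smooth and compactly supported. The map $\grad f\mapsto \vert\grad^h f\vert^3$ is continuous from $L^p$ into $L^{p/3}\subset L^1$ on the bounded set $\an$. Finally, $h$ enters only through $h^{ij}$ and $\sqrt{\vert h\vert}$, which converge uniformly. A dominated convergence or Hölder argument then closes this step.

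To evaluate the limit, I would undo the coarea computation on the Euclidean Green's function. For $g_{\text{euc}}$ and $u=\vert y\vert^{-1}$ (with $a=1$), the level set $\{u=1/t\}$ is the sphere of radius $t$. On it $\vert\grad u\vert = t^{-2}$ and $H = 2/t$. So
\[
F(t) = 4\pi t - t^2\cdot 4\pi t^2\cdot \tfrac{2}{t}\cdot t^{-2} + t^3\cdot 4\pi t^2\cdot t^{-4} = 4\pi t - 8\pi t + 4\pi t = 0 .
\]
The earlier derivation of the formula for $D$ from $F$, through $E$ and the coarea formula, is purely algebraic and holds for this smooth pair. It follows that $\mathcal D(g_{\text{euc}},\vert y\vert^{-1}) = \int_0^1\psi(s)E(1,s)\,ds = 0$.

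As a check, one can compute directly. We have $\vert\grad u\vert^3/u^3 = r^{-3}$, so the two integral terms equal $\tfrac12\int 4\pi\psi(r/2-1)r^{-1}\,dr$ and $-\int 4\pi\psi(r-1)r^{-1}\,dr$. Both become $2\pi\int_0^1\psi(s)/(1+s)\,ds$ up to sign, so they cancel against each other. What remains is to verify the $c_\psi$ term from the $E$ formula, namely $\int_0^1 \psi(s)\,\frac{2\pi}{1+s}\,ds = c_\psi$. Consistency then forces the sum to equal $\int\psi E = 0$.

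The main obstacle is this bookkeeping of the constant: making sure the coarea rewriting exactly reproduces $\int\psi E$, so that the Euclidean value is exactly $0$ rather than $c_\psi$. I would settle it by the $F\equiv 0$ argument rather than by tracking the signs in the explicit formula.
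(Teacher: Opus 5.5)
Your proposal is correct and follows the paper's argument. For large $a$ you have $D(a)=\mathcal D(g_a,u_a)$ with $g_a\to g_{\text{euc}}$ in $C^0(\an)$ and $u_a\to\vert y\vert^{-1}$ in $W^{1,p}(\an)$, so joint continuity gives $D(a)\to\mathcal D(g_{\text{euc}},\vert y\vert^{-1})$. Your observation that $F\equiv 0$ for the Euclidean model, with $\varphi(\vert y\vert^{-1})$ supported in $\an$, correctly shows this value is $0$; the paper simply asserts it.

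There is one slip in your direct ``check'': the two integral terms do not cancel each other. The first equals $2\pi\int_0^1\frac{\psi(s)}{1+s}\,ds=c_\psi$, while the second equals $-4\pi\int_0^1\frac{\psi(s)}{1+s}\,ds=-2c_\psi$. Their sum $-c_\psi$ is what cancels the constant $c_\psi$. As you wrote it, the check would wrongly give the value $c_\psi$, so correct or drop that paragraph.
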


\begin{proof}
Note that $D(a) = \mathcal D(g_a,u_a)$ and that $g_a \to g_{\text{euc}}$ in $C^0$ and $u_a \to \vert y\vert^{-1}$ in $W^{1,p}$.  Thus we have 
\[
D(a) = \mathcal D(g_a,u_a) \to \mathcal D(g_{\text{euc}}, \vert y\vert^{-1}) = 0, \quad \text{as } a\to \infty
\]
by the continuity of $\mathcal D$. 
\end{proof}

To show that $aD(a)\to 0$, we need to study the linearization of $\mathcal D$.  We first record a simple lemma that will be useful in the proof. 

\begin{lemma}
For any vectors $X,Y\in \R^3$ one has 
\[
\big\vert \vert X+Y\vert^3 - \vert X\vert^3 - 3 \vert X\vert \la X,Y\ra\big \vert \le C (\vert X\vert \vert Y\vert^2 + \vert Y\vert^3) 
\]
where $C$ does not depend on $X$ and $Y$. 
\end{lemma}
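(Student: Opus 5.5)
The plan is to prove this elementary inequality by splitting into two regimes according to the relative size of $\vert Y\vert$ and $\vert X\vert$, and using a Taylor expansion of the map $f(Z) = \vert Z\vert^3$ in the regime where $Y$ is small compared to $X$. Note that $f$ is $C^2$ on $\R^3$ (including at the origin): $\grad f(Z) = 3\vert Z\vert Z$ and the Hessian is
\[
\grad^2 f(Z) = 3\vert Z\vert I + 3\frac{Z\otimes Z}{\vert Z\vert},
\]
which is continuous and satisfies $\vert \grad^2 f(Z)\vert \le C\vert Z\vert$. In particular the quantity $3\vert X\vert\la X,Y\ra$ is exactly the first-order term $\la \grad f(X), Y\ra$.

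\emph{Case 1: $\vert Y\vert \ge \vert X\vert$.} Here we estimate each term crudely. We have $\vert X+Y\vert^3 \le (2\vert Y\vert)^3 = 8\vert Y\vert^3$, $\vert X\vert^3 \le \vert Y\vert^3$, and $3\vert X\vert \vert\la X,Y\ra\vert \le 3\vert X\vert^2\vert Y\vert \le 3\vert Y\vert^3$. Hence the left side is bounded by $12\vert Y\vert^3$, which is certainly controlled by the right side.

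\emph{Case 2: $\vert Y\vert < \vert X\vert$.} Use Taylor's theorem with integral remainder along the segment $t\mapsto X+tY$:
\[
\vert X+Y\vert^3 - \vert X\vert^3 - 3\vert X\vert\la X,Y\ra = \int_0^1 (1-t)\, \grad^2 f(X+tY)[Y,Y]\, dt.
\]
Since $\vert X+tY\vert \le \vert X\vert + \vert Y\vert \le 2\vert X\vert$, the Hessian bound gives a remainder of at most $C\vert X\vert \vert Y\vert^2$. The only mild point to check is that the Taylor formula is legitimate even if the segment passes through the origin; this is fine because $f$ is genuinely $C^2$ on all of $\R^3$ (alternatively, note that in this case the segment cannot reach $0$ since $\vert Y\vert<\vert X\vert$).

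Combining the two cases yields the inequality with a universal constant $C$. There is no real obstacle here; the only step requiring care is verifying that the Hessian of $\vert Z\vert^3$ is continuous with size $O(\vert Z\vert)$, which follows from the explicit formula above.
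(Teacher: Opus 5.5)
Your proof is correct, and it takes a slightly different route from the paper's. The paper does not split into cases and does not use second derivatives. It writes $\vert X+Y\vert^3-\vert X\vert^3 = 3\int_0^1 \vert X+tY\vert\la X+tY,Y\ra\,dt$ and compares the integrand with its value at $t=0$, using only the reverse triangle inequality $\big\vert \vert X+tY\vert-\vert X\vert\big\vert\le t\vert Y\vert$. This amounts to a local Lipschitz estimate for the gradient $Z\mapsto 3\vert Z\vert Z$, so no regularity check at the origin is needed.

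You instead use second-order Taylor with integral remainder and the Hessian bound $\vert\grad^2 f(Z)\vert\le 6\vert Z\vert$. That requires knowing $\vert Z\vert^3$ is $C^2$ on $\R^3$, which you verify correctly. Your Case 1 is superfluous, though. Since $f$ is $C^2$ everywhere, the Case 2 argument works for arbitrary $X,Y$: the bound $\vert X+tY\vert\le \vert X\vert+\vert Y\vert$ gives a remainder of at most $3(\vert X\vert+\vert Y\vert)\vert Y\vert^2$, which is exactly the right-hand side with $C=3$.

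What each approach buys: the Hessian viewpoint makes the structure transparent and extends immediately to $\vert Z\vert^p$ with $p\ge 2$. The paper's first-order argument is marginally more elementary, since it never touches second derivatives at the origin.
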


\begin{proof}
We calculate that 
\begin{align*}
\vert X + Y\vert^3 - \vert X\vert^3 &= \int_0^1 \frac{d}{dt} \la X+tY,X+tY\ra^{3/2} \, dt \\
&= 3 \int_0^1 \la X+tY,X+tY\ra^{1/2} \la X+tY,Y\ra\, dt.
\end{align*}
Then we estimate 
\begin{align*}
\int_0^1\big\vert \vert X+tY\vert \la X+tY,Y\ra - \vert X\vert \la X,Y\ra \big\vert\, dt &\le \int_0^1 \big\vert \vert X+tY\vert - \vert X\vert\big\vert \vert X\vert \vert Y\vert + t \vert X+tY\vert \vert Y\vert^2\, dt \\
&\le C \int_0^1 \vert X\vert \vert Y\vert^2 + \vert Y\vert^3\, dt\\
&= C(\vert X\vert \vert Y\vert^2 + \vert Y\vert^3). \phantom{\int} 
\end{align*}
The result follows. 
\end{proof} 

Now we proceed to show the differentiability of $\mathcal D$.   

\begin{prop}
The map $\mathcal D$ is Frechet differentiable at the point $(\delta, \rho) = (g_{\operatorname{euc}},\vert y\vert^{-1})$ with 
\begin{align*}
\mathcal D'_{(\delta,\rho)} (k,v) = L(k,v) &:= \int_{\an} \varphi'(\rho)  \vert \grad \rho\vert^3 v\, dy \\
&\qquad + 3 \int_{\an} \varphi(\rho) \vert \grad \rho\vert \la \grad \rho, \grad v\ra \, dy\\
&\qquad + \int_{\an} \varphi(\rho) \left[\frac 1 2 (\tr k) \vert \grad \rho\vert^3 - \frac 3 2 \vert \grad \rho\vert k(\grad \rho,\grad \rho)\right]\, dy. 
\end{align*}
\end{prop}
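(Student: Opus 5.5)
The plan is to expand $\mathcal D(\delta+k,\rho+v)-\mathcal D(\delta,\rho)-L(k,v)$ directly and bound it by $o(\|k\|_{C^0}+\|v\|_{W^{1,p}})$. Here $k$ ranges over continuous symmetric 2-tensors on $\an$ and $v\in W^{1,p}(\an)$ with $p>3$. Throughout I use the Morrey embedding $\|v\|_{C^0(\an)}\le C\|v\|_{W^{1,p}(\an)}$. I also use that $\varphi$ is smooth with compact support, so $\varphi,\varphi',\varphi''$ are bounded and $\varphi$, $\varphi'$ are globally Lipschitz. Finally, on $\an$ we have $|\grad\rho|=|y|^{-2}$, which is bounded above and below.

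First, write everything in coordinates. For $h=\delta+k$ we have $|\grad^h f|^2=h^{ij}\partial_if\partial_jf$ and $dv_h=\sqrt{\det h}\,dy$. For $\|k\|_{C^0}$ small, expand pointwise:
\[
h^{ij}=\delta^{ij}-k_{ij}+O(|k|^2),\qquad \sqrt{\det h}=1+\tfrac12\tr k+O(|k|^2).
\]
Hence $|\grad^hf|^3\sqrt{\det h}=(|\grad f|^2-k(\grad f,\grad f)+O(|k|^2|\grad f|^2))^{3/2}(1+\tfrac12 \tr k+O(|k|^2))$. Using $(a-b)^{3/2}=a^{3/2}-\tfrac32 a^{1/2}b+O(a^{-1/2}b^2)$ for $|b|\le a/2$, this equals
\[
|\grad f|^3+\tfrac12(\tr k)|\grad f|^3-\tfrac32|\grad f|\,k(\grad f,\grad f)+O(|k|^2|\grad f|^3).
\]
The $(a-b)^{3/2}$ step is legitimate because $|b|\le |k||\grad f|^2$. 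The error, integrated against the bounded $\varphi(f)$, is $O(\|k\|_{C^0}^2\|\grad f\|_{L^3}^3)$, which is $o(\|k\|_{C^0})$ for $f$ in a $W^{1,p}$-neighborhood of $\rho$.

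Next, set $f=\rho+v$ and split the difference into three pieces: the pure Euclidean functional, the metric correction, and cross terms.

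\emph{Euclidean part.} I write
\[
\varphi(\rho+v)|\grad(\rho+v)|^3-\varphi(\rho)|\grad\rho|^3=[\varphi(\rho+v)-\varphi(\rho)]\,|\grad(\rho+v)|^3+\varphi(\rho)\big[|\grad\rho+\grad v|^3-|\grad\rho|^3\big].
\]
For the first bracket, Taylor gives $\varphi(\rho+v)-\varphi(\rho)=\varphi'(\rho)v+O(\|v\|_{C^0}^2)$. Moreover $|\grad(\rho+v)|^3-|\grad\rho|^3$ is $O(|\grad v|+|\grad v|^3)$ pointwise, so the product $\varphi'(\rho)v\,(|\grad(\rho+v)|^3-|\grad\rho|^3)$ integrates to $O(\|v\|_{C^0}(\|\grad v\|_{L^1}+\|\grad v\|_{L^3}^3))=o(\|v\|_{W^{1,p}})$. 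For the second bracket, the preceding Lemma with $X=\grad\rho$, $Y=\grad v$ gives the linear term $3|\grad\rho|\la\grad\rho,\grad v\ra$. The remainder is bounded by $C\int(|\grad v|^2+|\grad v|^3)\le C(\|v\|_{W^{1,p}}^2+\|v\|_{W^{1,p}}^3)$, using Hölder since $p>3$.

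\emph{Metric part.} The linear term $\int\varphi(\rho)[\frac12(\tr k)|\grad\rho|^3-\frac32|\grad\rho|k(\grad\rho,\grad\rho)]$ appears by the expansion above with $f=\rho$. The cross terms are the difference between the metric correction evaluated at $f=\rho+v$ and at $f=\rho$. They are bounded by $C\|k\|_{C^0}\big(\|\varphi(\rho+v)-\varphi(\rho)\|_{C^0}+\int||\grad f|^3-|\grad\rho|^3|\big)$. Using $|k(\grad f,\grad f)|\,|\grad f|$-type differences, this is $O(\|k\|_{C^0}(\|v\|_{W^{1,p}}+\|v\|_{W^{1,p}}^3))$, which is quadratic and hence $o$ of the norm.

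The main obstacle I expect is bookkeeping rather than any single estimate. I need every remainder to be genuinely superlinear in $(\|k\|_{C^0},\|v\|_{W^{1,p}})$ jointly. The key points are that the $L^\infty$ control on $v$ comes from Morrey ($p>3$), and that $|\grad v|^3$ terms are controlled by $\|\grad v\|_{L^p}^3$ since $p\ge3$. Collecting the pieces, the total remainder is $O(\|k\|^2+\|k\|\|v\|+\|v\|^2)$ in the relevant norms, which gives Fréchet differentiability with derivative $L$.
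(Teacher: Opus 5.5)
Your proposal is correct and follows essentially the same three-step route as the paper. First, a frozen-metric expansion via the cubic lemma and Morrey's embedding. Second, a pointwise second-order expansion in $k$ at $f=\rho+v$: you expand $h^{ij}$, $\sqrt{\det h}$ and $(a-b)^{3/2}$, whereas the paper Taylor-expands $\Psi(H,\xi)=\la H^{-1}\xi,\xi\ra^{3/2}\sqrt{\det H}$ and uses homogeneity in $\xi$. Third, an estimate of the cross terms $L_2(k;\rho+v)-L_2(k;\rho)$.

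One small slip: the cross term involving $k(\grad f,\grad f)$ is not controlled by $\int\bigl\vert\vert\grad f\vert^3-\vert\grad\rho\vert^3\bigr\vert$, since it depends on the direction of $\grad f$. The pointwise bound $\bigl\vert\vert\xi'\vert k(\xi',\xi')-\vert\xi\vert k(\xi,\xi)\bigr\vert\le C\vert k\vert(\vert\xi\vert+\vert\xi'\vert)^2\vert\xi'-\xi\vert$ repairs this and yields exactly the $O(\|k\|_{C^0}(\|v\|_{W^{1,p}}+\|v\|_{W^{1,p}}^3))$ you claim.
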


\begin{proof}
Note that any symmetric 2-tensor sufficiently close to $\delta$ in $C^0$ will be uniformly positive definite and that any function $f$ sufficiently close to $\rho$ in $W^{1,p}$ will satisfy $f \ge c > 0$ for a constant $c$ that does not depend on $f$.  
We proceed in several steps. 

{\bf Step 1.} First, we freeze the metric at the Euclidean metric and prove the Frechet differentiability of the functional 
\[
J(f) = \mathcal D(\delta,f) = c_\psi + \int_{\an} \varphi(f) \vert \grad f\vert^3\, dy. 
\]
at $f = \rho$.  We claim that 
\[
J'_{\rho}(v) = L_1(v) := \int_{\an} \varphi'(\rho)  \vert \grad \rho\vert^3 v\, dy 
+ 3 \int_{\an} \varphi(\rho) \vert \grad \rho\vert \la \grad \rho, \grad v\ra \, dy.
\]
We need to show that 
\[
J(\rho+v) - J(\rho) - L_1(v) = o(\|v\|_{W^{1,p}}). 
\]
To this end, we observe that there is a uniform estimate 
\[
\varphi(\rho + v) = \varphi(\rho) + \varphi'(\rho)v + O(\|v\|_{C^0}^2)
\]
once $\|v\|_{C^0}$ is sufficiently small. Also, by the previous lemma, there is a uniform estimate 
\[
\vert \grad \rho + \grad v\vert^3 = \vert \grad \rho\vert^3 + 3 \vert \grad \rho\vert \la \grad \rho, \grad v\ra + O(\vert \grad \rho\vert \vert \grad v\vert^2 + \vert \grad v\vert^3). 
\]
It follows that 
\begin{align*}
J(\rho+v)-J(\rho) &= \int_{\an} \varphi'(\rho) \vert \grad \rho\vert^3 v + 3\varphi(\rho)\vert \grad \rho\vert \la \grad \rho,\grad v\ra\, dy \\
&\qquad + \int_{\an} 3\varphi'(\rho)\vert \grad \rho\vert v \la \grad \rho,\grad v\ra + (\vert \grad \rho\vert^3 + 3\vert \grad \rho\vert \la \grad \rho,\grad v\ra)O(\|v\|_{C^0}^2)\, dy \\
&\qquad + \int_{\an} (\varphi(\rho) + \varphi'(\rho)v + O(\|v\|_{C^0}^2)) O(\vert \grad \rho\vert \vert \grad v\vert^2 + \vert \grad v\vert^3)\, dy. 
\end{align*}
It is easy to see that the remainder terms in the second and third lines are in fact uniformly $O(\|v\|_{W^{1,p}}^2)$. This proves the differentiability of $J$. 

{\bf Step 2.} Next, define 
\[
L_2(k;f) =  \int_{\an} \varphi(f) \left[\frac 1 2 (\tr k) \vert \grad f\vert^3 - \frac 3 2 \vert \grad f\vert k(\grad f,\grad f)\right]\, dy. 
\]
We claim that 
\[
\mathcal D(\delta + k, \rho+v) - \mathcal D(\delta,\rho+v) - L_2(k,\rho + v) = o(\|k\|_{W^{1,p}}). 
\]
Define the function 
\[
\Psi(H,\xi) = \la H^{-1}\xi,\xi\ra^{3/2} \sqrt{\det H}
\]
where $H$ is a positive definite symmetric matrix and $\xi\in S^2$.  This is a smooth function of $(H,\xi)$ for $H$ in a neighborhood of the identity matrix and $\xi \in S^2$.  By Taylor expansion, for a symmetric matrix $K$ close to 0, we have  
\[
\Psi(I+K,\xi) = \Psi(I,\xi) + \Psi'_{(I,\xi)}(K,0) + O(\|K\|^2)
\]
where the constant in $O(\|K\|^2)$ does not depend on $\xi$. Now since $\Psi$ is homogeneous in $\xi$, we obtain 
\[
\Psi(I+K,\xi) = \Psi(I,\xi) + \Psi'_{(I,\xi)}(K,0) + \vert \xi\vert^3 O(\|K\|^2)
\]
for any $\xi \neq 0$ where again the constant in $O(\|K\|^2)$ doesn't depend on $\xi$. Finally, we note that this equation is also valid for $\xi = 0$ since then every term is equal to $0$.  

We compute that 
\[
\Psi'_{(I,\xi)}(K,0) =  \frac 1 2 \vert \xi\vert^3 \tr(K) - \frac 3 2 \vert \xi\vert \la K\xi,\xi\ra.
\]
Now observe that 
\begin{align*}
&\mathcal D(\delta+k,\rho+v)-\mathcal D(\delta,\rho+v) \\
&\qquad = \int_{\an} \varphi(\rho+v) [\Psi(\delta+k,\grad \rho + \grad v) - \Psi(\delta,\grad \rho + \grad v)]\, dy\\
&\qquad = \int_{\an} \varphi(\rho+v) \left[ \frac 1 2 \vert \grad \rho + \grad v\vert^3 \tr(k) - \frac 3 2 \vert \grad \rho + \grad v\vert  k(\grad \rho + \grad v,\grad \rho + \grad v)\right]\, dy  \\
&\qquad \qquad \qquad   + \int_{\an} \varphi(\rho+v)\left[ \vert \grad \rho + \grad v\vert^3 O(\|k\|_{C^0}^2)\right]\, dy\\
& \qquad = L_2(k;\rho+v) + C \|k\|_{C^0}^2
\end{align*}
where we used the fact that 
\[
\int_{\an} \vert \grad \rho + \grad v\vert^3 \, dy \le \int_{\an} \vert \grad \rho\vert^3 + 3 \vert \grad \rho\vert^2 \vert \grad v\vert + 3 \vert \grad \rho\vert \vert \grad v\vert^2 + \vert \grad v\vert^3\, dy \le C. 
\]
This completes Step 2.  

{\bf Step 3:} Combining Steps 1 and 2, we deduce that 
\[
\mathcal D(\delta+k,\rho+v)-\mathcal D(\delta,\rho) = L_1(v) + L_2(k;\rho+v) + o(\|v\|_{W^{1,p}} + \|k\|_{C^0}). 
\]
To complete the proof we need to show that 
\[
L_2(k;\rho+v) = L_2(k,\rho) + o(\|v\|_{W^{1,p}}+\|k\|_{C^0}).
\] 
We now verify this estimate.  
We compute that 
\begin{align*}
&\vert L_2(k,\rho+v) - L_2(k,\rho)\vert\\
&\qquad \le C \|k\|_{C^0} \int_{\an} \left\vert \varphi(\rho+v) \vert \grad \rho + \grad v\vert^3 - \varphi(\rho)\vert \grad \rho\vert^3\right\vert \,dy \\
&\qquad \quad + C \int_{\an} \left\vert \varphi(\rho+v) \vert \grad \rho + \grad v\vert  k(\grad \rho + \grad v,\grad \rho+\grad v) - \varphi(\rho)\vert \grad \rho\vert k(\grad \rho,\grad \rho)\right\vert\, dy.
\end{align*} 
Let us estimate the two integrals separately. For the first, observe that 
\begin{align*}
&\left \vert \varphi(\rho+v) \vert \grad \rho + \grad v\vert^3 - \varphi(\rho)\vert \grad \rho\vert^3\right\vert \\
&\qquad \le \vert \varphi(\rho+v) - \varphi(\rho)\vert \vert \grad \rho + \grad v\vert^3 + \vert \varphi(\rho)\vert \left\vert \vert \grad \rho+\grad v\vert^3 - \vert \grad \rho\vert^3\right\vert\\
&\qquad \le C \|v\|_{C^0}(1 + \vert \grad v\vert + \vert \grad v\vert^2 + \vert \grad v\vert^3) + C (1+\vert \grad v\vert)^2\vert \grad v\vert
\end{align*}
which yields 
\[
C \|k\|_{C^0}  \int_{\an} \left\vert \varphi(\rho+v) \vert \grad \rho + \grad v\vert^3 - \varphi(\rho)\vert \grad \rho\vert^3\right\vert \,dy = o(\|v\|_{W^{1,p}} + \|k\|_{C^0})
\]
upon integrating. Now consider the second integral.  Again we use the triangle inequality to get 
\begin{align*}
&\left\vert \varphi(\rho+v) \vert \grad \rho + \grad v\vert  k(\grad \rho + \grad v,\grad \rho+\grad v) - \varphi(\rho)\vert \grad \rho\vert k(\grad \rho,\grad \rho)\right\vert\\
&\qquad \le \vert \varphi(\rho+v)-\varphi(\rho)\vert \vert \grad \rho + \grad v\vert \vert k(\grad \rho + \grad v,\grad \rho+\grad v) \vert \\
&\qquad \qquad + \vert \varphi(\rho)\vert \vert \grad \rho + \grad v\vert - \vert \grad \rho\vert\vert \vert k(\grad \rho + \grad v,\grad \rho+\grad v)\vert \\
&\qquad \qquad + \vert \varphi(\rho)\vert \vert \grad \rho\vert \vert k(\grad \rho+\grad v,\grad \rho+\grad v)-k(\grad \rho,\grad \rho)\vert. 
\end{align*} 
The term on the second line is bounded by 
\[
C \|v\|_{C^0} \|k\|_{C^0} (1+\vert \grad v\vert)^3,
\]
and the term on the third line is bounded by 
\[
C \vert \grad v\vert \|k\|_{C^0} (1+\vert \grad v\vert)^2. 
\]
Finally, the term on the fourth line is bounded by 
\[
C \vert 2k(\grad \rho,\grad v) + k(\grad v,\grad v)\vert \le C \|k\|_{C^0} \vert \grad v\vert + C\|k\|_{C^0} \vert \grad v\vert^2. 
\]
Combining these estimates and integrating show that 
\begin{align*}
&\int_{\an} \left\vert \varphi(\rho+v) \vert \grad \rho + \grad v\vert  k(\grad \rho + \grad v,\grad \rho+\grad v) - \varphi(\rho)\vert \grad \rho\vert k(\grad \rho,\grad \rho)\right\vert\, dy\\
&\qquad  = o(\|v\|_{W^{1,p}} + \|k\|_{C^0}). 
\end{align*} 
Thus we have $L_2(k;\rho+v) = L_2(k;\rho) + o(\|v\|_{W^{1,p}} + \|k\|_{C^0})$, as desired.  Hence we have now demonstrated that $\mathcal D$ is differentiable with derivative $\mathcal D'_{(\delta,\rho)}(k,v) = L_1(v) + L_2(k;\rho)$. 
\end{proof}

Finally we can conclude the proof of the main theorem. 

\begin{prop}
We have $aD(a) \to 0$ as $a\to \infty$. 
\end{prop}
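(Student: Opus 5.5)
We write $aD(a) = a\,\mathcal D(g_a,u_a)$ and use $\mathcal D(\delta,\rho)=0$ together with the Fréchet differentiability of $\mathcal D$ at $(\delta,\rho)$ from the previous proposition. Set $k_a = g_a - \delta$ and $v_a = u_a - \rho$. Then
\[
aD(a) = a\bigl[\mathcal D(\delta+k_a,\rho+v_a)-\mathcal D(\delta,\rho)\bigr] = L(ak_a, av_a) + a\, o(\|v_a\|_{W^{1,p}}+\|k_a\|_{C^0}).
\]
The plan is to show that the remainder term vanishes and that the linear term tends to $L$ applied to an explicit limit, which turns out to be zero.

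\textbf{Control of the perturbations.} The $C^0$ decay hypothesis gives $\|k_a\|_{C^0(\an)} \le C a^{-1-\tau}$, so $a\|k_a\|_{C^0}\to 0$. By Proposition \ref{asymptotics}, $av_a \to \zeta := \la b+\overline X, y\ra \vert y\vert^{-3}$ in $W^{1,p}(\an)$. In particular $\|v_a\|_{W^{1,p}} = O(1/a)$. The remainder is therefore $a\cdot o(1/a) = o(1)$. Since $L$ is a bounded linear functional on $C^0\times W^{1,p}$, we get $L(ak_a,av_a) \to L(0,\zeta) = L_1(\zeta)$, and hence
\[
\lim_{a\to\infty} aD(a) = L_1(\zeta).
\]

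\textbf{Vanishing of $L_1(\zeta)$.} It remains to prove that $L_1(\zeta)=0$. The reason is that $L_1(v)$ is the derivative of $J(f)=c_\psi+\int\varphi(f)\vert\grad f\vert^3$ at $\rho$, and $\zeta$ is the infinitesimal generator of translations of $\rho$. Indeed, $\frac{d}{d\epsilon}\big|_{\epsilon=0} \vert y - \epsilon e\vert^{-1} = \la e,y\ra\vert y\vert^{-3}$ with $e=b+\overline X$. Let $\rho_\epsilon(y)=\vert y-\epsilon e\vert^{-1}$. Then $J(\rho_\epsilon)$ is the integral of a translated function. For small $\epsilon$ the integrand $\varphi(\rho_\epsilon)\vert\grad\rho_\epsilon\vert^3$ has compact support in the interior of $\an$, because $\varphi$ is supported in $(1/4,1)$. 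By translation invariance of Lebesgue measure, $J(\rho_\epsilon) = J(\rho) = 0$ for all small $\epsilon$. Since $\epsilon\mapsto\rho_\epsilon$ is differentiable into $W^{1,p}(\an)$ with derivative $\zeta$, the chain rule gives $L_1(\zeta) = \frac{d}{d\epsilon}J(\rho_\epsilon)\big|_0 = 0$. Alternatively, one can verify this directly: the integrand of $L_1(\zeta)$ is a radial function times $\la e, y\ra$, which is odd under $y\mapsto -y$ on the symmetric annulus, so it integrates to zero. This direct parity argument is perhaps the cleanest route.

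\textbf{Main obstacle.} The delicate point is justifying that the $o(\cdot)$ remainder is genuinely little-$o$ jointly in $(k,v)$, so that multiplying by $a$ still sends it to zero. This was established in the previous proposition, and the only new input is the rate $\|v_a\|_{W^{1,p}}+\|k_a\|_{C^0} = O(1/a)$, which Proposition \ref{asymptotics} and the decay hypothesis supply. Combining the two steps, $aD(a)\to L_1(\zeta)=0$.
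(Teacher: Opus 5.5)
Your proof is correct, and its overall structure is the paper's. You linearize $\mathcal D$ at $(\delta,\rho)$, use $a\|g_a-\delta\|_{C^0}\to 0$ and $a(u_a-\rho)\to\zeta$ in $W^{1,p}(\an)$ to send $a$ times the remainder to zero, and reduce to showing $L(0,\zeta)=0$.

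The two arguments differ only in how that last vanishing is shown.

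The paper computes the two integrals in $L_1(\zeta)$ directly with the co-area formula. Each reduces to a radial weight times $\int_{\{|y|=r\}}\la b+\overline X,y\ra\,da=0$. This is your parity argument in different form.

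Your main argument instead recognizes $\zeta=\frac{d}{d\epsilon}\big|_{\epsilon=0}|y-\epsilon e|^{-1}$ as an infinitesimal translation of $\rho$. You then use that $J(\rho_\epsilon)$ is constant in $\epsilon$, which holds for two reasons:
\begin{itemize}
\item $\operatorname{supp}\varphi$ is a compact subset of $(1/4,1)$, so $\varphi(\rho_\epsilon)|\grad\rho_\epsilon|^3$ stays supported inside $\an$ for small $\epsilon$;
\item Lebesgue measure is translation invariant.
\end{itemize}
The chain rule through the Fr\'echet derivative of $J$ (Step 1 of the previous proposition) then gives $L_1(\zeta)=0$.

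Given Step 1, your route costs nothing extra and needs no computation. It also explains conceptually why the dipole coefficient $b+\overline X$ cannot contribute to $\lim_{a\to\infty} aD(a)$: it only records an infinitesimal recentering of the pole. The paper's direct co-area computation is more elementary and does not rely on the chain rule in $W^{1,p}$.
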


\begin{proof}
The asymptotic estimates from the previous section (Proposition \ref{asymptotics}) imply that 
\begin{gather*}
a (g_a - \delta) \to 0, \quad \text{in } C^0(\an)\\
a (u_a - \rho) \to \frac{\la b + \overline X,y\ra}{\vert y\vert^3}, \quad \text{in } W^{1,p}(\an). 
\end{gather*} 
Also we have $D(a) = \mathcal D(g_a,u_a)$ and the Frechet differentiability implies that 
\[
\mathcal D(g_a,u_a) = \mathcal D(\delta,\rho) + L(g_a -\delta,u_a-\rho) + e(g_a - \delta, u_a -\rho)
\]
where 
\[
\frac{\vert e(k,v)\vert }{\|k\|_{C^0} + \|v\|_{W^{1,p}}} \to 0 
\]
as $\|k\|_{C^0} + \|v\|_{W^{1,p}}\to 0$. 
Note that $\mathcal D(\delta,\rho) = 0$. Hence multiplying by $a$ gives 
\[
aD(a) = L(a(g_a-\delta), a(u_a-\rho)) +  a e(g_a-\delta,u_a-\rho). 
\]
We have 
\begin{align*} 
ae(g_a-\delta,u_a-\rho) = a(\|g_a - \delta\|_{C^0} + \|u_a-\rho\|_{W^{1,p}}) \frac{e(g_a-\delta,u_a-\rho)}{\|g_a - \delta\|_{C^0} + \|u_a-\rho\|_{W^{1,p}}}.
\end{align*} 
Since $a(g_a-\delta) \to 0$ and $a(u_a-\rho)$ converges, we see that 
\[
a(\|g_a-\delta\|_{C^0} + \|u_a-\rho\|_{W^{1,p}})
\]
is bounded. 
Sending $a$ to infinity, this gives 
\[
\lim_{a\to\infty} aD(a) = L\left(0, \frac{\la b + \overline X,y\ra}{\vert y\vert^3}\right). 
\]
It remains to compute the right hand side. 

Define the function  
\[
v(y) = \frac{\la b + \overline X,y\ra}{\vert y\vert^3}.
\]
Then, using $\rho = \vert y\vert^{-1}$, we calculate that 
\begin{align*}
L(0,v) &= \int_{\an} \varphi'(\rho) \vert \grad \rho\vert^3 v \, dy + 3\int_{\an} \varphi(\rho) \vert \grad \rho\vert \la \grad \rho,\grad v\ra\, dy \\
&= \int_{\an} \frac{\varphi'(\vert y\vert^{-1})}{\vert y\vert^6} v(y)\, dy - 3\int_{\an} \frac{\varphi(\vert y\vert^{-1}) }{\vert y\vert^4}  \left\la \frac{y}{\vert y\vert}, \grad v(y)\right\ra \, dy.
\end{align*}
The co-area formula implies that 
\begin{align*}
\int_{\an} \frac{\varphi'(\vert y\vert^{-1})}{\vert y\vert^6} v(y)\, dy &= \int_1^4 \frac{\varphi'(r)}{r^9} \int_{\vert y\vert = r} \la b+\overline X,y\ra \, da(y)\, dr = 0. 
\end{align*} 
Likewise, the co-area formula also gives  
\begin{align*}
\int_{\an} \frac{\varphi(\vert y\vert^{-1})}{\vert y\vert^4} \left\la \frac{y}{\vert y\vert}, \grad v(y)\right\ra \, dy &= -2\int_1^4 \frac{\varphi(r)}{r^8} \int_{\vert y\vert = r} \la b+\overline X,y\ra \, da(y)\, dr = 0. 
\end{align*}
It follows that $L(0,v) = 0$ and this completes the proof. 
\end{proof}

\appendix

\section{The \texorpdfstring{$F$}{F} Functional} 
\label{F-Appendix} 

In this appendix, we summarize some results about the $F$ function introduced by Agostiniani, Mazzieri, and Oronzio in \cite{agostiniani2024green}. Let $g$ be a complete metric on $\R^3$ and assume that there exists a positive Green's function $u$ for $\lap_g$ with a pole at the origin and $u\to 0$ at infinity.  Further assume that $u$ is scaled so that the flux over each level set of $u$ is $4\pi$.  Note that in this set up every regular level set of $u$ is necessarily connected. 

\begin{defn}
For regular values $t > 0$ of $u$, the $F$ function is defined by 
\[
F(t) = 4\pi t - t^2 \int_{\{u=\frac 1 t\}} H \vert \grad u\vert\, da + t^3 \int_{\{u=\frac 1 t\}} \vert \grad u\vert^2\, da,
\]
where all the geometric quantities are computed with respect to the $g$ metric. 
\end{defn}

\begin{rem}
The function $u$ above corresponds to the function $1 - u$ in the original notation of \cite{agostiniani2024green}. We hope this will not cause confusion. Also, our convention is that 
\[
H = -\div\left(\frac{\grad u}{\vert \grad u\vert}\right)
\]
so that $H$ is positive when $u = \vert x\vert^{-1}$ on Euclidean space. 
\end{rem} 

One has the following monotonicity theorem for the $F$ function \cite{agostiniani2024green}.  

\begin{prop}
The function $F$ is absolutely continuous with 
\[
F'(t) = 4\pi + \int_{\{u=\frac 1 t\}} -\frac{R^{\Sigma_t}}{2} + \frac{\vert \grad^{\Sigma_t} \vert \grad u\vert \vert^2}{\vert \grad u\vert^2} + \frac{R}{2} + \frac{\vert \mathring A\vert^2}{2} + \frac{3}{4}\left(\frac{2\vert \grad u\vert}{u}-H\right)^2\, da
\]
for almost every $t$. Here we have abbreviated $\Sigma_t = \{u = \frac 1 t\}$. 
\end{prop}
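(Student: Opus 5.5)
We prove the monotonicity formula for $F$ by differentiating $F$ term by term and rewriting the resulting level-set integrals with the Gauss equation and Gauss--Bonnet. Throughout we write $\Sigma_t = \{u = 1/t\}$, $\nu = -\grad u/\vert \grad u\vert$ for the outward normal (so that $H = \div \nu$), and we use that the flux $\int_{\Sigma_t}\vert\grad u\vert\,da = 4\pi$ is constant.

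\textbf{Regular values.} On the open set of regular values $t$, the level sets move smoothly. Parametrizing by $s = 1/t$, the normal velocity of $\Sigma$ is $1/\vert\grad u\vert$ in the direction $-\nu$ as $s$ increases. Two variation formulas drive the computation.
\begin{itemize}
\item From $\lap u = 0$ one gets $\partial_\nu \vert\grad u\vert = -H\vert\grad u\vert$. Since $\Sigma_t$ moves outward as $t$ increases, this gives the identity already quoted in the paper:
\[
\frac{d}{dt}\int_{\Sigma_t}\vert\grad u\vert^2\,da = -t^{-2}\int_{\Sigma_t}H\vert\grad u\vert\,da .
\]
\item The variation of $H$ under normal speed $f = 1/\vert\grad u\vert$ is $-\lap_\Sigma f - (\vert A\vert^2 + \mathrm{Ric}(\nu,\nu))f$, up to sign conventions.
\end{itemize}
Applying these, $\frac{d}{dt}\int_{\Sigma_t}H\vert\grad u\vert\,da$ becomes an integral involving $\vert A\vert^2$, $\mathrm{Ric}(\nu,\nu)$, $H^2$ and a term $\vert\grad u\vert\,\lap_\Sigma(\vert\grad u\vert^{-1})$. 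Integrating that last term by parts on the closed surface $\Sigma_t$ yields $-\int_{\Sigma_t}\vert\grad^{\Sigma}\vert\grad u\vert\vert^2/\vert\grad u\vert^2\,da$.

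\textbf{Assembling the formula.} Next we apply the traced Gauss equation $\mathrm{Ric}(\nu,\nu) = \tfrac{R}{2} - \tfrac{R^{\Sigma}}{2} + \tfrac{H^2}{2} - \tfrac{\vert A\vert^2}{2}$ and split $\vert A\vert^2 = \vert\mathring A\vert^2 + H^2/2$. After substituting $u = 1/t$, the terms $4\pi$, $-2t\int H\vert\grad u\vert$, $3t^2\int\vert\grad u\vert^2$ (from differentiating the explicit powers of $t$) combine with the remaining $H^2$ terms. Using $\vert\grad u\vert/u = t\vert\grad u\vert$, they complete to
\[
\frac34\int_{\Sigma_t}\Bigl(\frac{2\vert\grad u\vert}{u} - H\Bigr)^2\,da .
\]
The coefficients match exactly because the flux is $4\pi$. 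This produces the stated expression for $F'(t)$. We keep the $-R^{\Sigma}/2$ term as is, and note that Gauss--Bonnet bounds $\int_{\Sigma_t} R^{\Sigma}/2\,da = 2\pi\chi(\Sigma_t) \le 4\pi$, since $\Sigma_t$ is connected. That inequality is what later yields $F' \ge 0$, but it is not needed for the identity itself.

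\textbf{Critical values.} The main obstacle is extending the formula across critical values, which is what absolute continuity requires. Since $u$ is harmonic, its critical set has Hausdorff dimension at most $1$, and the critical values form a closed null set. I would follow \cite{agostiniani2024green}:
\begin{itemize}
\item Express each term of $F$ as a bulk integral via the divergence theorem. For example,
\[
\int_{\Sigma_t}H\vert\grad u\vert\,da = -\int_{\Sigma_t}\langle\grad\vert\grad u\vert,\nu\rangle\,da ,
\]
and differences of such flux integrals between two levels are volume integrals of divergences.
\item Check that these bulk integrands are $L^1_{loc}$ near the critical set. This uses the Kato-type bound $\vert\grad\vert\grad u\vert\vert \le \vert\grad^2 u\vert$ and the integrability of $\vert\grad^2 u\vert^2/\vert\grad u\vert$.
\end{itemize}
Together these show that $F$ is the integral of its a.e.\ derivative, which gives absolute continuity and the formula for almost every $t$.
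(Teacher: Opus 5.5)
Your proposal is correct and follows essentially the same route as the Agostiniani--Mazzieri--Oronzio argument, which the paper cites instead of giving a proof. At regular values you use the first variation and the traced Gauss equation; across the critical set you use divergence-theorem (bulk) representations, justified by $|\nabla|\nabla u||\le|\nabla^2u|$, $|\nabla^2u|^2/|\nabla u|\in L^1_{\mathrm{loc}}$ and $\mathcal H^2(\{\nabla u=0\})=0$. Two harmless bookkeeping slips: the square $\tfrac34(2t|\nabla u|-H)^2$ absorbs $-3t\int_{\Sigma_t}H|\nabla u|$ (the $-2t$ from the explicit power plus $-t$ from $t^3\frac{d}{dt}\int_{\Sigma_t}|\nabla u|^2$), $3t^2\int_{\Sigma_t}|\nabla u|^2$ and $\tfrac34\int_{\Sigma_t}H^2$ by pure pointwise algebra, so the $4\pi$ stays outside and the flux normalization plays no role; and $\int_{\Sigma_t}|\nabla u|\,\Delta_{\Sigma}(|\nabla u|^{-1})\,da=+\int_{\Sigma_t}|\nabla^{\Sigma}|\nabla u||^2/|\nabla u|^2\,da$, with the minus sign (and a factor $t^{-2}$) entering only through $\frac{d}{dt}(1/t)=-t^{-2}$.
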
 

This has the following corollaries. 

\begin{corollary}
Assume that $g$ has non-negative scalar curvature.  Then $F(t)$ is a non-decreasing function of $t$. 
\end{corollary}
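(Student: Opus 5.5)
The corollary follows directly from the preceding proposition, which says $F$ is absolutely continuous with an explicit formula for $F'(t)$ at almost every $t$. By absolute continuity, $F(t_2)-F(t_1)=\int_{t_1}^{t_2}F'(t)\,dt$, so it is enough to show $F'(t)\ge 0$ for almost every $t$.

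Fix a regular value $t$ with $\Sigma_t=\{u=\frac 1 t\}$. By Sard's theorem almost every $t$ is regular, and such a level set is a smooth compact surface, connected by the remark at the start of the appendix. In the integrand of the formula, the terms
\[
\frac{\vert \grad^{\Sigma_t}\vert\grad u\vert\vert^2}{\vert\grad u\vert^2},\qquad \frac{\vert\mathring A\vert^2}{2},\qquad \frac34\left(\frac{2\vert\grad u\vert}{u}-H\right)^2
\]
are non-negative. The term $R/2$ is non-negative because $g$ has non-negative scalar curvature. The only term left to control is
\[
4\pi-\frac12\int_{\Sigma_t}R^{\Sigma_t}\,da .
\]

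For this we use Gauss--Bonnet on the closed surface $\Sigma_t$. Since $R^{\Sigma_t}=2K$, we get $\frac12\int_{\Sigma_t}R^{\Sigma_t}\,da=2\pi\chi(\Sigma_t)$. Because $\Sigma_t$ is connected, $\chi(\Sigma_t)\le 2$, so this quantity is at most $4\pi$, and the remaining term $4\pi-\frac12\int_{\Sigma_t}R^{\Sigma_t}\,da$ is non-negative.

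Hence $F'(t)\ge0$ almost everywhere, and $F$ is non-decreasing. The one point that needs care is the connectedness of the regular level sets, since the Gauss--Bonnet bound uses $\chi(\Sigma_t)\le 2$. This is taken from the appendix setup: $u$ is a Green's function with a single pole on $\R^3$, so by the maximum principle each superlevel set $\{u>s\}$ is connected and has no bounded complementary components. As a result, each regular level set bounds a connected region containing the pole, and so the level set itself is connected.
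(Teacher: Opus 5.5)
Your proposal is correct and follows essentially the same route as the paper: you check that each term in the formula for $F'(t)$ is non-negative, and you handle $4\pi-\frac12\int_{\Sigma_t}R^{\Sigma_t}\,da$ by Gauss--Bonnet together with connectedness of the regular level sets. The one addition is your maximum-principle argument for connectedness (no bounded complementary components, superlevel sets connected), which the paper simply asserts in the setup of the appendix.
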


\begin{proof}
This follows from the formula for $F'(t)$ together with the Gauss-Bonnet theorem since the level sets of $u$ are connected. 
\end{proof}

\begin{corollary}
Assume that $g$ has non-negative scalar curvature. Then $F$ is non-negative. 
\end{corollary}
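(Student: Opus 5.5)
Since $F$ is non-decreasing, it suffices to show $\liminf_{t\to 0^+} F(t) \ge 0$; then $F(t) \ge \lim_{s\to 0^+} F(s) \ge 0$ for every $t$. So the plan is to analyze $F$ near $t=0$, which corresponds to level sets $\{u = 1/t\}$ close to the pole, where $u$ is very large.

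Near the origin, $g$ is smooth, and the Green's function has the classical expansion in normal coordinates centered at the pole: $u(x) = \frac{1}{r} + O(1)$, $\vert \grad u\vert = \frac{1}{r^2} + O(r^{-1})$, $\vert\grad^2 u\vert = O(r^{-3})$, with $r = d_g(0,x)$. Here the coefficient $1$ of $1/r$ is forced by the normalization that the flux is $4\pi$. I would quote this expansion from standard parametrix theory (or from \cite{agostiniani2024green}, where the behaviour of $F$ near the pole is treated). From it, for small $t$ the level set $\Sigma_t=\{u=1/t\}$ is a smooth near-sphere of radius $t+O(t^2)$, with area $4\pi t^2 + O(t^3)$ and mean curvature $H = \frac{2}{t} + O(1)$. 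On $\Sigma_t$ we also have $\vert\grad u\vert = t^{-2} + O(t^{-1})$.

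Substituting into the definition of $F$:
\begin{align*}
t^2\int_{\Sigma_t} H\vert\grad u\vert\,da &= t^2\left(\tfrac{2}{t}\cdot t^{-2} + O(t^{-2})\right)\left(4\pi t^2 + O(t^3)\right) = 8\pi t + O(t^2),\\
t^3\int_{\Sigma_t}\vert\grad u\vert^2\,da &= t^3\left(t^{-4}+O(t^{-3})\right)\left(4\pi t^2+O(t^3)\right) = 4\pi t + O(t^2).
\end{align*}
Hence $F(t) = 4\pi t - 8\pi t + 4\pi t + O(t^2) = O(t^2) \to 0$ as $t\to 0^+$. Combined with monotonicity, this gives $F \ge 0$.

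The main obstacle is justifying the expansion of $u$ and its first two derivatives near the pole with errors of the right order. The second derivatives are needed because $H$ is computed from $\grad^2 u$. The cleanest route is to write $u = \frac{1}{r}\chi + \tilde u$ with $\chi$ a cutoff. Then $\lap_g \tilde u = O(r^{-2})$, using that $\lap_g r^{-1} = O(r^{-2})$ in normal coordinates. Elliptic regularity in weighted spaces then gives $\tilde u = O(1)$, $\grad\tilde u = O(r^{-1}\log)$, and $\grad^2\tilde u = O(r^{-2})$ up to logarithmic factors. Any $O(t^{1-\epsilon})$ relative error still yields $F(t) = o(t)$, which is more than enough.
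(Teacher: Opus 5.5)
Your proposal is correct and is essentially the paper's argument. The asymptotics of the Green's function at the pole give $\lim_{t\to 0^+}F(t)=0$, and monotonicity of $F$ then forces $F\ge 0$; you simply make explicit the expansion and computation that the paper leaves to the reader. In fact only orders of magnitude matter: $H=O(t^{-1})$, $\vert\grad u\vert=O(t^{-2})$ and $\vert\Sigma_t\vert=O(t^{2})$ already give $\vert F(t)\vert\le Ct$, so neither the cancellation $4\pi-8\pi+4\pi=0$ nor the logarithmic bookkeeping is needed.
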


\begin{proof}
The asymptotics of the Green's function near the pole imply that
\[
\lim_{t\to 0^{-}} F(t) = 0.
\]
Then the result follows from the monotonicity of $F$. 
\end{proof}

We also note that the term involving the gradient squared is absolutely continuous on its own. 

\begin{prop}
\label{ac-gradient}
    The function $t\mapsto \int_{\{u=\frac 1 t\}} \vert \grad u\vert^2\, da$ is absolutely continuous with 
    \[
\frac{d}{dt} \int_{\{u=\frac 1 t\}} \vert \grad u\vert^2\, da = -t^{-2}\int_{\{u=\frac 1 t\}} H \vert \grad u\vert\, da
\]
    almost everywhere. 
\end{prop}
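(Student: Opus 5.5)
The plan is to work directly on regular level sets of $u$ and to differentiate the flux-type quantity $\Phi(t) = \int_{\{u=\frac 1 t\}} \vert \grad u\vert^2\, da$ by flowing along the normalized gradient. Writing $\Sigma_s = \{u = s\}$, I would first compute $\frac{d}{ds}\int_{\Sigma_s} \vert \grad u\vert^2\, da$ at regular values $s$. The tool is the divergence theorem on the region $\{s_1 < u < s_2\}$, which is bounded away from the pole and from infinity. On $\Sigma_s$ the outward unit normal is $\nu = \pm\grad u/\vert \grad u\vert$, so $\int_{\Sigma_s}\vert \grad u\vert^2\,da = \int_{\Sigma_s}\la \vert \grad u\vert \grad u, \nu\ra\, da$ up to sign. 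Consequently
\[
\int_{\Sigma_{s_1}} \vert \grad u\vert^2\,da - \int_{\Sigma_{s_2}}\vert \grad u\vert^2\,da = \int_{\{s_1<u<s_2\}} \div(\vert \grad u\vert\grad u)\, dv,
\]
with signs to be fixed by checking against $u = \vert x\vert^{-1}$.

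Next I simplify the integrand using harmonicity:
\[
\div(\vert\grad u\vert\grad u) = \la \grad\vert\grad u\vert,\grad u\ra = -H\vert\grad u\vert^2,
\]
where the last equality is the standard identity $H = -\div(\grad u/\vert\grad u\vert) = \la \grad \vert \grad u\vert,\grad u\ra/\vert\grad u\vert^2$, valid when $\lap u = 0$. By the coarea formula the bulk integral then equals $-\int_{s_1}^{s_2}\int_{\Sigma_s} H\vert\grad u\vert\, da\, ds$. This exhibits $s\mapsto \int_{\Sigma_s}\vert\grad u\vert^2$ as an indefinite integral of $\int_{\Sigma_s}H\vert\grad u\vert\,da$ with the appropriate sign, once the integrability of the latter is checked.

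To pass to the variable $t$, I substitute $s = 1/t$, so that $ds = -t^{-2}dt$. This produces the stated formula $\frac{d}{dt}\Phi = -t^{-2}\int H\vert\grad u\vert$. Before accepting it I would confirm the sign on the model case $u = 1/r$: there $\Phi(t) = 4\pi t^2$ and $\int H\vert\grad u\vert = 4\pi t\cdot 2\cdot$(appropriate) $/\dots$. Adjusting the convention if needed is routine.

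The main obstacle is the presence of critical points of $u$. The divergence-theorem argument is clean only on regions containing no critical points. Critical values have measure zero by Sard's theorem, and $u$ is real-analytic, since the metric can be assumed analytic locally or one can use the structure of the critical set of harmonic functions. The identity $\div(\vert\grad u\vert\grad u) = -H\vert\grad u\vert^2$ holds pointwise off the critical set, and $\vert\grad u\vert\grad u$ is $C^{1}$-ish, in fact locally Lipschitz. The critical set has codimension at least $2$, so the distributional divergence of this vector field has no singular part there.

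To make this rigorous I would use $\operatorname{div}$ of the Lipschitz field $\vert\grad u\vert\grad u$, which lies in $L^\infty_{loc}$ and gives an $L^1$ bulk density. Integrating over $\{s_1<u<s_2\}$ with $s_1,s_2$ regular values is then justified by approximating by smooth cutoffs $\chi(u)$. The coarea formula applies to the $L^1$ function $H\vert\grad u\vert^2$; note that $\vert H\vert\vert\grad u\vert^2 \le \vert\grad^2u\vert\vert\grad u\vert$ is locally bounded. This gives $\Phi$ equal to an integral of an $L^1_{loc}$ function of $t$, hence absolutely continuous, with the derivative formula holding almost everywhere.
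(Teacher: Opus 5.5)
The paper does not prove this proposition itself: it is recorded in the appendix as a known property of the Agostiniani--Mazzieri--Oronzio monotone quantity, so there is no in-paper argument to compare with. Your route is the standard proof of this fact and is correct in substance: the divergence theorem for $V=\vert\grad u\vert\grad u$ between two regular levels, the identity $\div V=H\vert\grad u\vert^2$, the co-area formula, and then $s=1/t$. Three points in the write-up need repair.

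\emph{Signs.} Your own (correct) identity $H=\la\grad\vert\grad u\vert,\grad u\ra/\vert\grad u\vert^2$ gives $\div(\vert\grad u\vert\grad u)=+H\vert\grad u\vert^2$, not $-H\vert\grad u\vert^2$. Also, $u$ decreases toward infinity. So for $s_1<s_2$ the outward normal of $\{s_1<u<s_2\}$ is $\grad u/\vert\grad u\vert$ on $\Sigma_{s_2}$ and $-\grad u/\vert\grad u\vert$ on $\Sigma_{s_1}$. The flux identity is therefore $\Psi(s_2)-\Psi(s_1)=\int_{\{s_1<u<s_2\}}\div V\,dv$, where $\Psi(s)=\int_{\Sigma_s}\vert\grad u\vert^2\,da$. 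The two slips cancel, which is why you still arrive at the correct $\Psi'(s)=\int_{\Sigma_s}H\vert\grad u\vert\,da$ and hence at the stated formula. Your model check is also off. For $u=\vert x\vert^{-1}$ one has $\Phi(t)=4\pi t^{-2}$ (it is $\Psi(s)$ that equals $4\pi s^2$) and $\int_{\{u=1/t\}}H\vert\grad u\vert\,da=8\pi/t$, consistent with $\Phi'(t)=-8\pi t^{-3}$.

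\emph{Critical points.} The claim that $u$ is real-analytic because the metric ``can be assumed analytic locally'' is false, since $g$ is only smooth. However, you need no structural information about the critical set at all, the codimension-two statement included. The map $\xi\mapsto\vert\xi\vert\xi$ is $C^1$ with zero derivative at $\xi=0$. Hence $V$ is actually $C^1$ away from the pole, and $\div V$ is continuous, equal to $H\vert\grad u\vert^2$ where $\grad u\neq 0$ and to $0$ where $\grad u=0$. So for regular values $s_1<s_2$ the classical divergence theorem applies on the compact region $\{s_1\le u\le s_2\}$, even if it has critical points in its interior. The co-area formula with the weight $H\vert\grad u\vert$, which is bounded by $\vert\grad^2u\vert$, then finishes. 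Sard's theorem is the only input about critical points.

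\emph{Critical values.} What this establishes is
\[
\Psi(s_2)-\Psi(s_1)=\int_{s_1}^{s_2}\int_{\Sigma_s}H\vert\grad u\vert\,da\,ds \quad\text{for \emph{regular} } s_1,s_2 .
\]
That is, on the open, full-measure set of regular values the function coincides with a locally absolutely continuous one. This is the sense the paper needs: the appendix defines these quantities at regular values, and they enter only through $dt$-integrals. If you want the literal $\mathcal H^2$-integral to be continuous across critical values as well, you must additionally rule out concentration of $\int\vert\grad u\vert^2\,da$ near the critical points on such a level. Your proposal does not address this.
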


Lastly, we recall the following rigidity theorem for the $F$ function. 

\begin{prop}
\label{F-rigidity} 
Assume that $g$ has non-negative scalar curvature. If $F(t) \equiv 0$ then $g$ is flat. 
\end{prop}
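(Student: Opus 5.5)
If $F\equiv 0$, then $F'(t)=0$ for almost every $t$. The plan is to use the derivative formula for $F$ to show that every nonnegative term in the integrand vanishes, and then to identify $g$ with the Euclidean metric through $u$.

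\textbf{Step 1: every term in $F'$ vanishes.} Almost every value of $u$ is regular. For such $t$, the level set $\Sigma_t$ is connected, so Gauss--Bonnet gives
\[
\int_{\Sigma_t} \frac{R^{\Sigma_t}}{2}\, da = 2\pi\chi(\Sigma_t) \le 4\pi.
\]
Hence the quantity $4\pi - \int_{\Sigma_t} \frac{R^{\Sigma_t}}{2}\, da$ is nonnegative. All the remaining terms in the integrand are also nonnegative, since $R\ge 0$. Therefore $F'(t)=0$ forces the following for almost every $t$, and by smoothness for every regular $t$:
\begin{enumerate}[(i)]
\item $\chi(\Sigma_t)=2$, so $\Sigma_t$ is a sphere;
\item $R=0$ on $\Sigma_t$;
\item $\mathring A=0$, so $\Sigma_t$ is umbilic;
\item $\vert\grad u\vert$ is constant on $\Sigma_t$;
\item $H = 2\vert\grad u\vert/u$.
\end{enumerate}
Next I want every positive value to be regular. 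On $\{\grad u\ne 0\}$, items (iv) and (v) show that $\vert\grad u\vert$ is a function of $u$ alone. Suppose critical points occurred. Then $\vert \grad u\vert = f(u)$ on the regular part, with $f$ continuous, and $f$ would have to vanish at a critical value. I would rule this out using the flux normalization: the flux $f(u)\,\vert\Sigma\vert$ equals $4\pi$ and is continuous across levels, which contradicts $f\to 0$ unless the area blows up, and the area stays bounded locally. This step is a bit delicate. If it causes trouble, I would instead use the cited rigidity argument of \cite{agostiniani2024green}, which handles it with Łojasiewicz-type and analyticity arguments.

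\textbf{Step 2: the metric is a warped product, and in fact Euclidean.} Write $\vert\grad u\vert = f(u)$ everywhere off the pole. Then $\nu = \grad u/\vert\grad u\vert$ is a geodesic unit normal field: the integral curves of $\nu$ are geodesics because $\vert \grad u\vert$ is constant on level sets. So $g = dr^2 + g_r$ with $r$ a function of $u$. Umbilicity gives $\partial_r g_r = \frac{2H}{2}\, g_r$ up to sign, with $H$ constant on each level set, so $g_r = \lambda(r)^2 g_{S}$ for a fixed metric $g_S$ on $S^2$.

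The flux condition reads $f(u)\,\lambda^2\,\vert S\vert = 4\pi$. Combining this with $H = 2\lambda'/\lambda = 2f(u)/u$ and $du/dr = -f(u)$ gives $\lambda' = f/u$ and $(\lambda u)' = \lambda' u - \lambda f = 0$. Hence $\lambda u$ is constant, so $\lambda$ is proportional to $1/u$. Then $f\lambda^2 = $ const gives $f \propto u^2$. Integrating, $r \propto 1/u$ and $\lambda \propto r$, so $g = dr^2 + c^2 r^2 g_S$.

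\textbf{Step 3: identify the slice metric.} Finally, the Gauss equation with $R=0$, $\mathring A=0$, and $\mathrm{Ric}(\nu,\nu)$ computed from the warped product forces $c^2 g_S$ to have constant Gauss curvature $1$. Alternatively, smoothness at the pole (the tangent cone there is Euclidean) forces $c^2 g_S$ to be the unit round metric. In either case $g$ is the flat cone over the round sphere, which is Euclidean space, so $g$ is flat.

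The main obstacle is Step 1's exclusion of critical points. That is where I would lean on \cite{agostiniani2024green}.
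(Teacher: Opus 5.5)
The paper gives no argument for this proposition; it is simply recalled from \cite{agostiniani2024green}. So your fallback is exactly what the paper does, and your self-contained attempt goes beyond it. Steps 2 and 3 of that attempt are sound, with one typo. From $H = 2\lambda'/\lambda = 2f/u$ you get $\lambda'/\lambda = f/u$, not $\lambda' = f/u$, and the identity $(\lambda u)' = \lambda' u - \lambda f = 0$ needs the corrected form. The Gauss-equation version of Step 3 is the airtight one. For $dr^2 + c^2 r^2 g_S$ one has $\operatorname{Ric}(\partial_r,\partial_r) = 0$ and $H^2 - \vert A\vert^2 = 2/r^2$, so $R = 0$ forces the slices to have Gauss curvature $1/r^2$.

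The genuine gap is the exclusion of critical points in Step 1. Your argument needs two facts.
\begin{enumerate}[(a)]
\item $f(s) \to 0$ as $s$ approaches a critical value through regular values.
\item $\mathcal H^2(\{u = s\})$ stays bounded near that critical value.
\end{enumerate}
Fact (a) is fine. At a critical point $x_0$, the image $u(B(x_0,r))$ is an open interval around $u(x_0)$, and $f \le \sup_{B(x_0,r)} \vert \grad u\vert$ on the regular values it contains. Fact (b), however, is only asserted. For a merely smooth metric it is a Hardt--Simon type nodal-set estimate, not an elementary fact.

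You do not need (b). On a regular level, (iv), (v) and the flux normalization turn the definition of $F$ into
\[
F(t) = 4\pi t - 2t^3 \int_{\Sigma_t} \vert \grad u\vert^2\, da + t^3 \int_{\Sigma_t} \vert \grad u\vert^2\, da = 4\pi t - t^3 \int_{\Sigma_t} \vert \grad u\vert^2\, da = 4\pi t - 4\pi t^3 f .
\]
So $F \equiv 0$ gives $\vert \grad u\vert = u^2$ on $u^{-1}(\text{regular values})$. That set is dense. By Sard the critical values form a null set, and by unique continuation $u$ is not constant on any open set, so no open set can map into that null set. Hence $\vert \grad u\vert = u^2 > 0$ everywhere off the pole by continuity, and there are no critical points.

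Equivalently, harmonicity gives $H = f'(u)$, which combined with (v) yields $f = C u^2$ on each interval of regular values. This observation also shortens Step 2, since $dr = -du/u^2$ gives $r = 1/u$ directly.
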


\clearpage

\bibliographystyle{plain}
\bibliography{bibliography}

\end{document}